\documentclass{article}

\usepackage{arxiv}

\usepackage[utf8]{inputenc} 
\usepackage[T1]{fontenc}    
\usepackage{hyperref}       
\usepackage{url}            
\usepackage{booktabs}       
\usepackage{amsfonts}       
\usepackage{nicefrac}       
\usepackage{microtype}      
\usepackage{graphicx}
\usepackage{natbib}
\usepackage{doi}
\usepackage{todonotes}
\usepackage{setspace}
\usepackage{thmtools}

\usepackage[export]{adjustbox}
\usepackage{caption}

\newcommand{\squeeze}{}

\usepackage{multirow}
\usepackage{layout}

\usepackage{amsmath}
\usepackage{amsthm}
\usepackage{amssymb}
\usepackage{mathtools}
\usepackage{siunitx}

\usepackage{enumitem}

\usepackage{bbm}

\usepackage{color}
\usepackage{verbatim}
\usepackage{subcaption}

\usepackage{cleveref}

\usepackage{apptools}
\usepackage{thmtools}
\usepackage{algorithm, algpseudocode}

\usepackage[flushleft]{threeparttable}
\usepackage{array,makecell}



\newcommand{\norm}[1]{\left\| #1 \right\|}

\newcommand{\inp}[2]{\left\langle#1,#2\right\rangle} 

\newcommand{\parens}[1]{\left( #1 \right)}
\newcommand{\brac}[1]{\left\{ #1 \right\}}


\newcommand{\cB}{\mathcal{B}}
\newcommand{\cC}{\mathcal{C}}

\newcommand{\cN}{\mathcal{N}}
\newcommand{\cO}{\mathcal{O}}

\newcommand{\cS}{\mathcal{S}}

\newcommand{\cX}{\mathcal{X}}

\newcommand{\mA}{\mathbf{A}}

\newcommand{\mG}{\mathbf{G}}

\newcommand{\mI}{\mathbf{I}}

\newcommand{\mP}{\mathbf{P}}

\newcommand{\mZ}{\mathbf{Z}}
\newcommand{\mU}{\mathbf{U}}

\newcommand{\mV}{\mathbf{V}}
\newcommand{\mY}{\mathbf{Y}}
\newcommand{\mX}{\mathbf{X}}


\newcommand{\ceil}[1]{\left\lceil #1\right\rceil}

\newcommand{\R}{\mathbb{R}} 

\newcommand{\eqdef}{:=} 

\DeclareMathOperator{\vol}{vol}               

\DeclareMathOperator{\sign}{sign}





\DeclareMathOperator{\dom}{dom}         

\DeclareMathOperator{\interior}{int}    
\DeclareMathOperator{\ri}{ri}           
\DeclareMathOperator{\sri}{sri}         
\DeclareMathOperator{\bdry}{bdry}       


\DeclareMathOperator{\diag}{diag}       
\DeclareMathOperator*{\argmin}{arg\,min}
\DeclareMathOperator*{\argmax}{arg\,max}



\usepackage{tcolorbox}
\usepackage{pifont}
\definecolor{myred}{RGB}{238,102,119}
\definecolor{myred2}{RGB}{215,60,50}
\definecolor{myblue}{RGB}{68,119,170}
\definecolor{yaleblue}{rgb}{0.06, 0.3, 0.57}
\definecolor{mydarkgreen}{RGB}{39,130,67}
\definecolor{mydarkred}{RGB}{192,47,25}
\definecolor{mydarkorange}{RGB}{39,130,67}

\newcommand{\algnamesmall}[1]{{\small \sf #1}}
\newcommand{\algnamefootnote}[1]{{\footnotesize \sf #1}}



\newtheorem{lemma}{Lemma}

\newtheorem{theorem}{Theorem}

\newtheorem{corollary}{Corollary}
\newtheorem{fact}{Fact}

\theoremstyle{plain}

\newtheorem{remark}[theorem]{Remark}

\theoremstyle{definition}

\newtheorem{definition}[theorem]{Definition}


\usepackage{listings}
\usepackage{xcolor}

\definecolor{codegreen}{rgb}{0,0.6,0}
\definecolor{codegray}{rgb}{0.5,0.5,0.5}
\definecolor{codepurple}{rgb}{0.58,0,0.82}
\definecolor{backcolour}{rgb}{0.95,0.95,0.92}

\lstdefinestyle{mystyle}{
    backgroundcolor=\color{backcolour},   
    commentstyle=\color{codegreen},
    keywordstyle=\color{magenta},
    numberstyle=\tiny\color{codegray},
    stringstyle=\color{codepurple},
    basicstyle=\ttfamily\footnotesize,
    breakatwhitespace=false,         
    breaklines=true,                 
    captionpos=b,                    
    keepspaces=true,                 
    numbers=left,                    
    numbersep=5pt,                  
    showspaces=false,                
    showstringspaces=false,
    showtabs=false,                  
    tabsize=2
}

\lstset{style=mystyle}

\newcommand{\rbrac}[1]{\left(#1\right)}

\newcommand{\cbrac}[1]{\left\{#1\right\}}

\newcommand{\BProxSub}[3]{\textnormal{{brox}}^{#1}_{#2}(#3)}
\newcommand{\ProxSub}[2]{\textnormal{{prox}}_{#1}(#2)}

\newcommand{\lmo}[2]{{\rm LMO}_{#1}(#2)}

\allowdisplaybreaks

\title{Non-Euclidean Broximal Point Method:\\ A Blueprint for Geometry-Aware Optimization}

\author{Kaja Gruntkowska \\
	KAUST\thanks{King Abdullah University of Science and Technology}\\
    	Center of Excellence for Generative AI\\ 	
	 Thuwal, Saudi Arabia \\
    	\And
    	Peter Richt\'{a}rik \\
    	KAUST$^*$ \\	
    	Center of Excellence for Generative AI\\ 
        Thuwal, Saudi Arabia \\
}

\date{}


\usepackage[hyperpageref]{backref}
\hypersetup{ 
    colorlinks = true,
    urlcolor = yaleblue,
    linkcolor = yaleblue,
    citecolor = yaleblue
}
\renewcommand*{\backref}[1]{}
\renewcommand*{\backrefalt}[4]{%
   \ifcase #1 %
     \footnotesize{(Not cited.)}%
   \or
     \footnotesize{(Cited on page~#2)}%
   \else
     \footnotesize{(Cited on page~#2)}%
\fi }

\begin{document}
\maketitle

\begin{abstract}
    The recently proposed Broximal Point Method (\algnamesmall{BPM}) \citep{gruntkowska2025ball} offers an idealized optimization framework based on iteratively minimizing the objective function over norm balls centered at the current iterate. It enjoys striking global convergence guarantees, converging linearly and in a finite number of steps for proper, closed and convex functions. However, its theoretical analysis has so far been confined to the \emph{Euclidean} geometry. At the same time, emerging trends in deep learning optimization, exemplified by algorithms such as \algnamesmall{Muon} \citep{jordan2024muon} and \algnamesmall{Scion} \citep{pethick2025training}, demonstrate the practical advantages of minimizing over balls defined via \emph{non-Euclidean} norms which better align with the underlying geometry of the associated loss landscapes.
    In this note, we ask whether the convergence theory of \algnamesmall{BPM} can be extended to this more general, non-Euclidean setting. We give a positive answer, showing that most of the elegant guarantees of the original method carry over to arbitrary norm geometries. Along the way, we clarify which properties are preserved and which necessarily break down when leaving the Euclidean realm. Our analysis positions Non-Euclidean \algnamesmall{BPM} as a conceptual blueprint for understanding a broad class of geometry-aware optimization algorithms, shedding light on the principles behind their practical effectiveness.
\end{abstract}

\section{Introduction}

Optimization stands as a cornerstone of modern machine learning, powering the success of virtually everything, from deep neural networks to state-of-the-art large language models. As models grow ever larger and more complex, the demand for better optimization algorithms evolves in tandem. What is needed now is a new class of methods--ones that are not only computationally efficient and scalable, but also inherently robust to the nonconvex, high-dimensional landscapes that characterize contemporary deep learning.

The field of deep learning optimization has long relied on \algnamesmall{Adam} and related algorithms \citep{kingma2014adam, loshchilov2017decoupled}, whose core innovation lies in adaptive moment estimation. While these methods have achieved significant empirical success, their theoretical behavior, particularly in nonconvex landscapes, remains only partially understood.
Recently, however, a new class of optimization methods has begun to challenge this long-standing dominance. Algorithms such as \algnamesmall{Muon} \citep{jordan2024muon}, \algnamesmall{Scion} \citep{pethick2025training} and  \algnamesmall{Gluon} \citep{riabinin2025gluon} break away from the adaptive moment paradigm. Instead, they adopt a different principle: structured updates derived by minimizing a linear approximation of the loss function over a carefully chosen norm ball. Crucially, these norm balls are \emph{non-Euclidean}, aiming to reflect the intrinsic geometry of the optimization problem (see \Cref{sec:muon_intro}). Operating at the individual layer level, these algorithms combine simplicity, scalability, and promising empirical performance, with several studies suggesting their potential to outperform \algnamesmall{Adam(W)} in large-scale deep learning tasks \citep{liu2025muon, pethick2025training, shah2025practical, therien2025muloco, tveit2025muon}.

While \algnamesmall{Muon} and \algnamesmall{Scion}---both of which utilize spectral norm balls---are among the most prominent and actively studied examples in this emerging line of work, the underlying principle extends well beyond these specific instances. By changing the geometry through altering the norm, one can recover a range of familiar optimization methods. For example, selecting $\ell_1$, $\ell_2$, or $\ell_\infty$ norms yields Coordinate Descent (\algnamesmall{CD}) \citep{luo1992CD,nesterov2012efficiency,richtarik2011iteration,wright2015coordinate,sigma_k}, normalized Gradient Descent (\algnamesmall{$\|$GD$\|$}) \citep{nesterov1984minimization, hazan2015beyond, Momentum_Norm_SGD_2020, Orabona_normalized_GD, EF-norm}, and Sign Gradient Descent (\algnamesmall{SignGD}) \citep{riedmiller1993direct, bernstein2018signsgd, safaryan2021stochastic}, respectively. We explore these connections in more detail in Sections \ref{sec:muon_intro},~\ref{sec:lit_rev}, and \Cref{sec:lin_bpm}.

The structural simplicity of this family of methods has sparked renewed interest in understanding the interplay between the geometric properties of the optimization landscape and the resulting theoretical and empirical performance \citep{kovalev2025understanding, li2025note, pethick2025training, riabinin2025gluon}. Still, despite these advances, the theoretical groundwork is far from complete. We are merely at the initial stages of piecing together the mathematical explanations for their behavior, success, and deeper connections to core optimization concepts.

One notable effort in this direction is the Broximal Point Method (\ref{eq:bpm_e}), recently proposed by \citet{gruntkowska2025ball}--a theoretically grounded algorithm that captures one of the core principles of this emerging class of optimizers. While \algnamesmall{Muon}-style methods iteratively minimize surrogate linear models over ball-constrained regions, \algnamesmall{BPM} takes a more direct route by targeting the original objective function itself. This distinction enables \algnamesmall{BPM} to enjoy remarkable convergence properties, supported by a clean and elegant theoretical analysis (see \Cref{thm:bpm_e}).

Yet a key discrepancy remains: unlike \algnamesmall{Muon} and \algnamesmall{Scion}, which operate over \emph{non-Euclidean} norm balls tailored to the problem geometry, \algnamesmall{BPM} is confined to the \emph{Euclidean} setting. This raises a natural question:
\begin{center}
\emph{Can the theoretical benefits of \algnamesmall{BPM} be extended to non-Euclidean geometries?}
\end{center}
In this work, we give a partially affirmative answer to this question. By extending the convergence theory of \citet{gruntkowska2025ball} to general norm settings, we take a step toward aligning the theoretical foundations of \algnamesmall{BPM} with the algorithms actually employed in modern machine learning practice.

\subsection{Outline}

The structure of this work is as follows. In \Cref{sec:euclidean}, we introduce the general setup and walk the reader through the main motivation behind our work--the Broximal Point Method \citep{gruntkowska2025ball}. We provide a comprehensive introduction to \algnamesmall{BPM}, outline its key convergence properties, and offer additional commentary on these results, including some insights not discussed in the original paper.
\Cref{sec:non_euclidean} is devoted to the non-Euclidean extension of the method. We begin by motivating the transition beyond Euclidean geometry in \Cref{sec:muon_intro}, and then present our main contribution: Non-Euclidean \algnamesmall{BPM}, introduced in \Cref{sec:non_euclidean_bpm}. The theoretical guarantees of the method are established in \Cref{thm:conv-nbpm}.
In \Cref{sec:lit_rev}, we provide a broader review of related literature and highlight the various areas of optimization that our work connects with, including ball oracles, linear minimization oracles, preconditioning techniques, and trust region methods. We conclude the paper with a summary of our findings (\Cref{sec:conclusion}).

\section{Conceptual Foundations}\label{sec:euclidean}

Motivated by the empirical success of optimization methods that iteratively minimize (models of) the loss over non-Euclidean balls, this work seeks to extend the convergence theory of the \algnamesmall{BPM} algorithm beyond the Euclidean case.
To set the stage, let $\cS$ be a finite-dimensional vector space equipped with an inner product $\inp{\cdot}{\cdot}: \cS \times \cS \to \R$, which induces the standard Euclidean norm $\norm{\cdot}_2$. We additionally endow $\cS$ with a potentially non-Euclidean norm $\norm{\cdot}: \cS \to \R_{\geq0}$, and denote its dual norm by $\norm{\cdot}_{\star}: \cS \to \R_{\geq0}$, defined via $\norm{x}_{\star} \eqdef \sup_{\norm{z}\leq 1} \inp{x}{z}$.

Within this setting, we study the general optimization problem
\begin{align}\label{eq:problem}
    \min_{x \in \cS} f(x),
\end{align}
where $f: \cS \mapsto \R \cup \{+\infty\}$ is a proper (that is, ${\rm dom} f \eqdef \{x \in \cS : f(x)<+\infty\}$ is non-empty), closed and convex function with at least one minimizer. We denote the set of minimizers of $f$ by $\cX_{\star}$ and the optimal value by $f_\star \eqdef \inf_{x \in \cS} f(x)$.
This broad formulation encompasses a vast array of problems in optimization, machine learning, signal processing, computational biology, and applied mathematics. Our focus in the remainder of the paper is to understand how the non-Euclidean geometry---encoded via $\norm{\cdot}$---affects the theoretical guarantees of \algnamesmall{BPM} when the Euclidean norm ball constraint is replaced by one induced by $\norm{\cdot}$.

Throughout the paper, lowercase letters (e.g., $x$) denote vectors in $\cS$, while bold uppercase letters (e.g.,~$\mX$) represent matrices. We write $\boldsymbol{0}$ and $\mI$ for the zero and identity matrices, respectively, with dimensions clear from the context. For any $\sigma \in \R^d$, the notation $\diag(\sigma)$ refers to the diagonal matrix in $\R^{d \times d}$ whose diagonal entries are given by $\sigma$.

\subsection{From Proximal to Broximal Point Method}

Before detailing our contributions, let us retrace the path that led to this work. The starting point lies in the fundamental challenge of global nonconvex optimization--one that is, in general, NP hard \citep{murty1987someNP}. Motivated by the desire to tackle such problems, \citet{gruntkowska2025ball} propose an idealized meta-algorithm, the Broximal Point Method (\algnamesmall{BPM}). Though abstract, this method offers a fresh, theoretically grounded perspective on optimizer design.

The key inspiration behind \algnamesmall{BPM} is the classical Proximal Point Method (\algnamesmall{PPM}) \citep{rockafellar1976monotone}, which augments the objective function with a quadratic penalty term and iteratively solves the resulting regularized subproblems. Formally, consider problem \eqref{eq:problem} with $\cS = \R^d$. \algnamesmall{PPM} aims to solve it via the update rule
\begin{align*}
    x_{k+1} = \ProxSub{\gamma_k f}{x_k} \eqdef \argmin\limits_{z\in\R^d} \brac{f(z) + {\color{myred} \frac{1}{2 \gamma_k} \norm{z-x_k}_2^2}}, \tag{\algnamesmall{PPM}}
\end{align*}
where $\gamma_k>0$ is a stepsize and $\ProxSub{\gamma f}{x} \eqdef \argmin_{z\in\R^d} \brac{f(z) + \frac{1}{2 \gamma} \norm{z-x}_2^2}$ denotes the \emph{proximal operator}.
Building on this idea, \citet{gruntkowska2025ball} replace the {\color{myred} quadratic penalty} term with a hard {\color{myblue} ball constraint}. The resulting method, \algnamesmall{BPM}, performs the update
\begin{align}\label{eq:bpm_e}
    x_{k+1} = \BProxSub{t_k}{f}{x_k} \eqdef \argmin\limits_{z\in\R^d} \brac{f(z): {\color{myblue} \norm{z - x_k}_2 \leq t_k}}. \tag{\algnamesmall{BPM}}
\end{align}
That is, at each iteration, \algnamesmall{BPM} applies the \emph{Ball-proximal (``broximal'') operator} $\BProxSub{t}{f}{x} \eqdef \argmin_{z \in \cB_2(x,t)} f(z)$, moving from $x_k$ to a minimizer of $f$ within the \emph{Euclidean} ball $\cB_2(x_k, t_k) \eqdef \brac{z \in \R^d : \norm{z - x_k}_2 \leq t_k}$ of radius~$t_k$ centered at~$x_k$.

Surprisingly, this modification yields a host of elegant convergence properties, many of which go far beyond what is typically achievable under assumptions as minimal as those of \Cref{thm:bpm_e} stated below. Although the original \algnamesmall{BPM} paper also treats the nonconvex setting---motivated by the goal of developing methods capable of escaping local minima (a task that \algnamesmall{BPM} can accomplish whenever the radius $t_k$ is sufficiently large)---for clarity we restrict our attention to the convex case to highlight the core theoretical guarantees of \algnamesmall{BPM}.

\begin{theorem}[\citet{gruntkowska2025ball}, Theorem 8.1]\label{thm:bpm_e}
    Assume that $f: \R^d \mapsto \R \cup \{+\infty\}$ is proper, closed and convex, $\cX_\star \neq \emptyset$, and let $\{x_k\}_{k\geq0}$ be the iterates of \ref{eq:bpm_e} run with any sequence of positive radii $\{t_k\}_{k\geq0}$, where $x_0\in {\rm dom} f$. Then

    \begin{enumerate}
        \item \textbf{One Step Convergence.}
        If $\cX_{\star}\cap \cB_2(x_k, t_k)\neq\emptyset$, then $x_{k+1}\in\cX_{\star}$, i.e., $x_{k+1}$ is optimal.
        
        \textnormal{\color{gray} This holds regardless of the radius size, implying that \algnamesmall{BPM} can converge \emph{arbitrarily fast}, even in a single iteration, if the radius~$t_0$ is large enough, i.e., $t_0 \geq \textnormal{dist}(x_0, \cX_{\star})$, where $\textnormal{dist}(x, \cC) \eqdef \inf_{z\in\cC} \norm{x-z}_2$.}
    
        \item \textbf{Super-Accelerated Linear Convergence in Distance to Minimizer.}
        If $\cX_{\star}\cap \cB_2(x_k, t_k)=\emptyset$, then $x_{k+1}$ is a singleton and $\norm{x_{k+1} - x_k}_2 = t_k$, meaning that the iterates move from the center to the boundary of the ball $\cB_2(x_k, t_k)$ (hence, $t_k$ can be thought of as the effective stepsize). Moreover, for any $x_{\star}\in\cX_{\star}$,
        \begin{align}\label{eq:d_conv_e}
            \norm{x_{k+1} - x_{\star}}_2^2 \leq \norm{x_k - x_{\star}}^2_2 - t_k^2.
        \end{align}
        \textnormal{\color{gray} Inequality \eqref{eq:d_conv_e} directly yields the recursive bound $\textnormal{dist}^2(x_{k+1}, \cX_{\star}) \leq \textnormal{dist}^2(x_k, \cX_{\star}) - t_k^2$. Moreover, by rearranging the terms in \eqref{eq:d_conv_e}, we obtain the expression
        \begin{align*}
            \norm{x_{k+1} - x_{\star}}_2^2
            \leq \parens{1 - \frac{t_k^2}{\norm{x_k - x_{\star}}_2^2}} \norm{x_k - x_{\star}}_2^2.
        \end{align*}
        While equivalent, the latter form makes it clear that the distance to the solution $x_\star$ decreases at a \emph{linear rate}. Not only is the rate linear, but it \emph{keeps improving} with each iteration, even if the radius $t_k \equiv t > 0$ is kept constant. Since in view of \eqref{eq:d_conv_e} the sequence $\{\norm{x_k - x_{\star}}_2\}_{k\geq 0}$ is strictly decreasing, the contraction factor $1 - \nicefrac{t^2}{\norm{x_k - x_{\star}}_2^2}$ also decreases with $k$, leading to progressively faster convergence. This behavior justifies calling the rate \emph{super-accelerated}. Moreover, the convergence rate is completely \emph{independent of the problem's condition number}.
        \newline
        Perhaps unexpectedly, the result holds without assuming smoothness or strong convexity--even without requiring milder alternatives such as the Polyak-Łojasiewicz condition, which are typically required to ensure linear rates \citep{karimi2016linear}.}
                    
        \item \textbf{Finite Convergence.}
        From the preceding point, it follows that if $\sum_{k=0}^{K-1} t_k^2 \geq \textnormal{dist}^2(x_0, \cX_{\star})$, then $x_K\in\cX_{\star}$.
        
        \textnormal{\color{gray} A direct consequence is that with a constant radius $t_k \equiv t > 0$, \algnamesmall{BPM} converges to the \textit{exact optimum} in a \textit{finite number of steps}: $K = \ceil{\nicefrac{\textnormal{dist}^2(x_0, \cX_{\star})}{t^2}}$. In other words, the super-accelerated decrease guaranteed by \eqref{eq:d_conv_e} continues to improve until the iterates reach the solution exactly, at which point the distance to $x_\star$ becomes exactly $0$. This sharply contrasts with all other optimization methods, including \algnamesmall{PPM}, which only converge asymptotically. \algnamesmall{BPM} can thus be seen as the first ``direct'' method of optimization.}
        
        \item \textbf{Super-Accelerated Linear Convergence in Function Values.}
        For any $k\geq 0$, the function value decreases according to
        \begin{align*}
            f(x_{k+1}) \leq f(x_k) - t_k \frac{f(x_{k+1}) - f_{\star}}{\norm{x_{k+1} - x_{\star}}},
        \end{align*}
        and hence
        \begin{eqnarray}\label{eq:f_conv_e}
            f(x_{k+1}) - f_{\star} \leq \parens{1 + \frac{t_k}{\norm{x_{k+1} - x_{\star}}_2}}^{-1} \parens{f(x_k) - f_{\star}}.
        \end{eqnarray}
        \textnormal{\color{gray} The result establishes a linear convergence rate for function value suboptimality, again without requiring strong convexity, differentiability, or finite-valuedness (thus covering constrained problems).}
        
        \item \textbf{Gradient Convergence.}
        If $f$ is differentiable, then $\norm{\nabla f(x_{k+1})}_2 \leq \norm{\nabla f(x_k)}_2$ for all $k\geq 0$, and
        \begin{eqnarray*}
            f(x_{k+1}) \leq f(x_k) - t_k \norm{\nabla f(x_{k+1})}.
        \end{eqnarray*}
        Therefore,
        \begin{eqnarray}\label{eq:grad_conv_e}
            \sum_{k=0}^{K-1} \parens{\frac{t_k}{\sum_{j=0}^{K-1} t_j} \norm{\nabla f(x_{k+1})}_2}
            \leq \frac{f(x_0) - f_{\star}}{\sum_{k=0}^{K-1} t_k}.
        \end{eqnarray}
        \textnormal{\color{gray} The bound in \eqref{eq:grad_conv_e} simplifies considerably when the radius is constant, i.e., $t_k \equiv t$. In that case, it becomes $\frac{1}{K} \sum_{k=0}^{K-1} \norm{\nabla f(x_{k+1})}_2 \leq \frac{f(x_0) - f_{\star}}{K t}$, which implies an $\cO(\nicefrac{1}{Kt})$ convergence rate for the average gradient norm.}
    \end{enumerate}
\end{theorem}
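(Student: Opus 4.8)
The plan is to drive every part of the theorem from a single dichotomy at step $k$: either $\cX_\star\cap\cB_2(x_k,t_k)\neq\emptyset$, or not. \textbf{Part 1} is then immediate — if $x_\star\in\cX_\star\cap\cB_2(x_k,t_k)$, it is feasible for the subproblem defining $x_{k+1}$, so $f(x_{k+1})\le f(x_\star)=f_\star$, which forces $f(x_{k+1})=f_\star$, i.e.\ $x_{k+1}\in\cX_\star$. Existence of $x_{k+1}$ is not an issue here: $f$ is closed, the ball is compact in finite dimension, and $x_k\in\dom f$ makes each subproblem feasible. Everything else concerns the ``hard'' case $\cX_\star\cap\cB_2(x_k,t_k)=\emptyset$, from which I would first extract one geometric inequality and then let it propagate.

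In the hard case I would establish, in order: (i) $x_{k+1}$ lies on the sphere, $\norm{x_{k+1}-x_k}_2=t_k$ — otherwise $x_{k+1}$ would be an interior minimizer of a convex function, hence a global one, contradicting $\cX_\star\cap\cB_2(x_k,t_k)=\emptyset$; (ii) $x_{k+1}$ is the \emph{unique} minimizer — if $z_1\neq z_2$ were both optimal, their midpoint would be a feasible point of no larger value lying in the \emph{interior} of the strictly convex Euclidean ball, contradicting (i); and (iii) the ``obtuse-angle'' inequality $\inp{x_{k+1}-x_k}{x_\star-x_{k+1}}\ge0$ for every $x_\star\in\cX_\star$. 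For (iii): since $x_{k+1}\notin\cX_\star$ we have $f(x_\star)=f_\star<f(x_{k+1})$, so convexity gives $f\bigl(x_{k+1}+\lambda(x_\star-x_{k+1})\bigr)<f(x_{k+1})$ for all $\lambda\in(0,1]$; by optimality of $x_{k+1}$ none of these points can be feasible, hence the quadratic $\lambda\mapsto\norm{x_{k+1}-x_k+\lambda(x_\star-x_{k+1})}_2^2$ exceeds its value $t_k^2$ at $\lambda=0$ throughout $(0,1]$, so its derivative at $0$, namely $2\inp{x_{k+1}-x_k}{x_\star-x_{k+1}}$, is nonnegative.

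From here Parts 2--4 cascade. Expanding $\norm{x_k-x_\star}_2^2=t_k^2+2\inp{x_k-x_{k+1}}{x_{k+1}-x_\star}+\norm{x_{k+1}-x_\star}_2^2$ and applying (iii) (note $\inp{x_k-x_{k+1}}{x_{k+1}-x_\star}=\inp{x_{k+1}-x_k}{x_\star-x_{k+1}}\ge0$) yields \eqref{eq:d_conv_e}; rewriting it as a contraction and telescoping against the \emph{fixed} minimizer $x_\star=\Proj{\cX_\star}{x_0}$ — together with the observation that once an iterate enters $\cX_\star$ it stays there by Part 1 — gives Finite Convergence. For function values I would instead use a \emph{complementary} feasible point: in the hard case $\norm{x_k-x_\star}_2>t_k$, so $z:=x_k+\tfrac{t_k}{\norm{x_k-x_\star}_2}(x_\star-x_k)\in\cB_2(x_k,t_k)$, and optimality of $x_{k+1}$ plus convexity give $f(x_{k+1})-f_\star\le\bigl(1-\tfrac{t_k}{\norm{x_k-x_\star}_2}\bigr)(f(x_k)-f_\star)$; the triangle inequality $\norm{x_k-x_\star}_2\le t_k+\norm{x_{k+1}-x_\star}_2$ then upgrades this to \eqref{eq:f_conv_e}, and rearranging gives the displayed one-step bound (the case $x_{k+1}\in\cX_\star$ being immediate).

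For the gradient statements (assuming $f$ differentiable), I would invoke first-order optimality of $x_{k+1}$ over the ball: $\inp{\nabla f(x_{k+1})}{z-x_{k+1}}\ge0$ for all $z\in\cB_2(x_k,t_k)$, i.e.\ $-\nabla f(x_{k+1})$ lies in the ball's normal cone at $x_{k+1}$, so $\nabla f(x_{k+1})=-\mu(x_{k+1}-x_k)$ for some $\mu\ge0$ and thus $\norm{\nabla f(x_{k+1})}_2=\mu t_k$. Substituting into $f(x_k)\ge f(x_{k+1})+\inp{\nabla f(x_{k+1})}{x_k-x_{k+1}}$ gives $f(x_{k+1})\le f(x_k)-t_k\norm{\nabla f(x_{k+1})}_2$, and summing/telescoping produces \eqref{eq:grad_conv_e}; combining this radial form of $\nabla f(x_{k+1})$ with monotonicity of the gradient, $\inp{\nabla f(x_k)-\nabla f(x_{k+1})}{x_k-x_{k+1}}\ge0$, and Cauchy--Schwarz yields $\norm{\nabla f(x_{k+1})}_2\le\norm{\nabla f(x_k)}_2$. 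The main obstacle — and the conceptual core — is step (iii) together with the boundary/uniqueness facts in (ii): these lean on \emph{strict convexity of the Euclidean ball}, precisely the property lost in the non-Euclidean setting, so I expect this to be where the eventual generalization must work hardest and where some conclusions (notably uniqueness of $x_{k+1}$) may genuinely break down.
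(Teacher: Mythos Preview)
Your proposal is correct. The overall architecture matches the paper's, but two steps take genuinely different routes. For the key inequality $\inp{x_{k+1}-x_k}{x_\star-x_{k+1}}\ge0$ in Part~2, the paper (in its proof of the inner-product analogue, \Cref{thm:conv-ebrox}) invokes subdifferential calculus: it shows there exists $c\ge0$ with $c(x_k-x_{k+1})\in\partial f(x_{k+1})$ via the normal cone of the ball and a constraint-qualification argument (\Cref{thm:n2ndlp}, \Cref{cor:2ndebrox}), then evaluates the subgradient inequality at $x_\star$. Your derivative-of-the-quadratic argument obtains the same inequality directly and more elementarily, without subdifferential sum rules. For Part~4, the paper's main route (\Cref{thm:conv-nbpm}~\ref{pt:1step_nbpm}) eliminates $\norm{g}_\star$ from the two bounds $f(x_k)-f(x_{k+1})\ge t_k\norm{g}_\star$ and $f(x_{k+1})-f_\star\le\norm{g}_\star\norm{x_{k+1}-x_\star}$; your line-segment argument is exactly the alternative the paper records separately as \Cref{thm:con_cbrox} (yielding the factor $1-t_k/\norm{x_k-x_\star}$), and your additional triangle-inequality step to convert this into $\bigl(1+t_k/\norm{x_{k+1}-x_\star}\bigr)^{-1}$ is a clean bridge between the two forms that the paper does not spell out. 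Parts~1, 3, and~5 coincide with the paper's arguments essentially verbatim, and your closing remark about strict convexity of the Euclidean ball being the hinge for uniqueness and the distance bound is precisely the point the paper makes in \Cref{fig:example} and \Cref{sec:ellipsoids}.
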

Collectively, \algnamesmall{BPM} exhibits striking and mathematically appealing convergence properties. To the best of our knowledge, it is the \emph{first direct optimization method}, in the sense that it solves the problem in a \emph{finite number of steps}. While direct methods have long been known in linear algebra (Gaussian elimination being a classic example), no comparable approach has previously existed in the optimization context.
But of course, this comes with a catch: the strength of these guarantees rests on the ability to compute the broximal operator; that is, ability to minimize the original objective function $f$ over a ball constraint. This is itself a potentially nontrivial optimization task and, in general, is difficult. This is especially true in nonconvex and high-dimensional regimes, where closed-form solutions are rarely available. As such, \algnamesmall{BPM} is best viewed not as a practical algorithm, but rather as an \emph{idealized framework}--a theoretical scaffold for designing and understanding a family of methods that approximate its behavior.
In this light, \algnamesmall{BPM} serves as a \emph{conceptual blueprint}, defining the outer boundaries of what is possible under geometric constraints. Yet, ``conceptual'' does not mean ``irrelevant''. As we explain in \Cref{rem:norm_gd} and further demonstrate in \Cref{sec:non_euclidean}, practical algorithms, particularly those in the \algnamesmall{Muon} family, can be viewed as \emph{approximations} of this ideal. Indeed, they can be interpreted as approximate broximal updates applied to surrogate subproblems. In doing so, these methods achieve tractability by trading exactness in the subproblem solution for computational feasibility.

\begin{remark}\label{rem:norm_gd}
    \Cref{thm:bpm_e} allows the radii $t_k$ to be arbitrarily large, and choosing a sufficiently large initial radius $t_0$ leads to convergence in a single step. However, such a strategy is clearly impractical. In real-world applications, we must rely on approximations to the original objective (as discussed in \Cref{sec:non_euclidean}), typically based on information at $x_k$. Once we introduce such approximations, it becomes necessary to impose upper bounds on $t_k$ to ensure that the model we optimize remains a faithful proxy for the true function $f$ on $\cB_2(x_k, t_k)$.

    This need for stepsize control is illustrated in \citet[Theorem F.4]{gruntkowska2025ball}, which analyzes the behavior of \algnamesmall{BPM} when applied to a linear approximation of $f$ at the current iterate, i.e.,
    \begin{align}\label{eq:lin_bpm0}
        x_{k+1} = \BProxSub{t_k}{f_k}{x_k},
    \end{align}
    where $f_k(z) \eqdef f(x_k) + \inp{\nabla f(x_k)}{z - x_k}$ (see \eqref{eq:muon_scion_det}).
    The authors show that when $f: \R^d \mapsto \R \cup \{+\infty\}$ is differentiable and convex, the iterates of \eqref{eq:lin_bpm0} (which, in this setting, are equivalent to normalized Gradient Descent--see \Cref{sec:muon_intro} and \Cref{sec:lin_bpm}) run with a sequence of radii $\{t_k\}_{k\geq0}$ such that
    \begin{align}\label{eq:asobnf}
        0 < t_k \leq \frac{\inp{\nabla f(x_k)}{x_k - x_\star}}{\norm{\nabla f(x_k)}_2}
    \end{align}
    satisfy
    \begin{eqnarray*}
        \norm{x_{k+1} - x_{\star}}_2^2 \leq \norm{x_k-x_{\star}}_2^2 - t_k^2.
    \end{eqnarray*}
    
    Condition \eqref{eq:asobnf} is satisfied, for instance, by setting $t_k = \nicefrac{(f(x_k) - f_\star)}{\norm{\nabla f(x_k)}_2}$, in which case the algorithm becomes equivalent to Gradient Descent with Polyak stepsize \citep{polyak1987introduction}.
        
    The result above can be derived by tracing the original \algnamesmall{BPM} proof \citep[Theorem 8.1]{gruntkowska2025ball} and modifying it to apply to $f_k$ rather than $f$. The guarantee closely mirrors the guarantee in \eqref{eq:d_conv_e}, with one critical distinction: here, the \emph{stepsizes must be bounded}. As a result, by introducing approximation, we gain implementability, but forfeit the arbitrarily fast convergence rate enjoyed by the idealized \algnamesmall{BPM}.

    This point of view offers a new understanding of why stepsize restrictions naturally arise in optimization algorithms. In practice, we do not work with perfect models of the objective function. Instead, we optimize certain approximations of $f$, which requires more conservative stepsizes to ensure that these approximations remain sufficiently accurate. In contrast, when operating with the exact model, one can recover remarkably strong guarantees, as shown in \Cref{thm:bpm_e}.
\end{remark}

\section{Beyond Euclidean Geometry}\label{sec:non_euclidean}

Euclidean geometry has long served as the standard framework for analyzing optimization algorithms, for example by relying on the classical $L$-smoothness assumption\footnote{The function $f$ is $L$--smooth if $\norm{\nabla f(x) - \nabla f(y)}_{\star} \leq L \norm{x - y}$ for all $x, y \in \cS$.} to facilitate convergence analysis. Yet, in the world of deep learning, where structure varies across layers and dimensions, it often fails to reflect the true nature of the problem. Recent success stories \citep{bernstein2024old, bernstein2018signsgd, jordan2024muon, pethick2025training, riabinin2025gluon} strongly suggest that stepping beyond the Euclidean framework can yield substantial benefits. In this section, we first provide a brief overview of how non-Euclidean geometries naturally emerge in modern optimization algorithms and how this understanding paves the way for a more general variant of the Broximal Point Method. We then present our main contributions: we formally introduce our Non-Euclidean Broximal Point Method, establish its theoretical properties, and explain why it offers additional advantages over the Euclidean version--benefits that extend beyond mere generality.

\subsection{Generalizing BPM: Lessons from Practice}\label{sec:muon_intro}

Although not originally designed with this perspective, algorithms such as \algnamesmall{Muon} \citep{jordan2024muon} can be interpreted as following one key principle: instead of directly optimizing the complex objective function $f$, they minimize a simplified \emph{model} of it---typically a linear approximation---within a ball defined by a suitable norm. This leads to update rules of the form\footnote{In practice, updates are applied \emph{layer-wise}, rather than to the full parameter vector as a whole. Specifically, the network parameter vector $x$ represents a collection of matrices $\mX^i \in \R^{m_i \times n_i}$, each corresponding to one of the $p$ layers $i = 1, \ldots, p$. The full parameter vector is $\mX = [\mX^1, \ldots, \mX^p] \in \cS$, where $\cS \eqdef \bigotimes_{i=1}^p \R^{m_i \times n_i}$. Each layer $i$ is associated with its own norm $\norm{\cdot}_{(i)}$, and the update rule in~\eqref{eq:muon_scion} is applied independently to each group, so that the algorithm iterates $\mX^i_{k+1} = \mX^i_k + t^i_k \lmo{\cB^i(\boldsymbol{0}, 1)}{\mG^i_k}$ for all $i = 1, \dots, p$, where $\mG^i_k$ is the momentum term for layer $i$ and $\cB^i(\mX, t)\eqdef\{\mZ \in \R^{m_i \times n_i} : \norm{\mX-\mZ}_{(i)} \leq t\}$ is the corresponding norm ball--for more details, see \citet{riabinin2025gluon}.}
\begin{align}\label{eq:muon_scion}
    x_{k+1} = x_k + t_k \lmo{\cB(0,1)}{g_k},
\end{align}
where $g_k$ represents a momentum term accumulating stochastic gradient information, $\cB(x,t)\eqdef\{z\in \cS : \norm{z-x} \leq t\}$ for some (possibly non-Euclidean) norm, and
\begin{align*}
    \lmo{\cB(x,t)}{g} \eqdef \argmin \limits_{z \in \cB(x,t)} \inp{g}{z}
\end{align*}
is the \emph{linear minimization oracle} (LMO) that outputs the minimizer of a certain linear function over a ball constraint.

The power of the above framework lies in its inherent flexibility: the geometry of the ball is dictated by the choice of the norm, which can be tailored to reflect the underlying structure of the parameter space. For instance, in deep networks, layer-specific operator norms can be used to better capture anisotropy across layers, leading to more effective updates and improved training dynamics compared to traditional methods \citep{liu2025muon, pethick2025training, riabinin2025gluon}.

At a higher level, the general update rule~\eqref{eq:muon_scion} defines a large family of optimization algorithms, parameterized by the choice of the norm defining the ball constraint.
In matrix spaces, for instance, selecting the \emph{operator norm} $\norm{\mA}_{\alpha \to \beta} \eqdef \sup _{\norm{z}_\alpha=1} \norm{\mA z}_\beta$ and setting $\norm{\cdot} = \norm{\cdot}_{2\to2}$, the LMO becomes $\lmo{\cB(0, 1)}{\mG_k} = - \mU_k \mV_k^T$, where $\mG_k = \mU_k \textnormal{diag}(\sigma_k) \mV_k^T$ is the (reduced) singular value decomposition of the momentum matrix~$\mG_k$. In this case,~\eqref{eq:muon_scion} becomes $\mX_{k+1} = \mX_k - t_k \mU_k \mV_k^T$, which is precisely the update rule applied to hidden layers by \algnamesmall{Muon}/\algnamesmall{Scion}.\footnote{\algnamefootnote{Muon} is an optimizer specifically designed for hidden layers; the first and last layers are optimized using a different method, such as \algnamefootnote{Adam(W)} \citep{jordan2024muon}.}

Although \algnamesmall{Muon} provides the primary motivation for extending our framework beyond the Euclidean setting, it is by no means the only geometry choice of interest. For instance, when $\cS = \R^d$, selecting the $\ell_1$ norm recovers the update rule of Coordinate Descent (\algnamesmall{CD}) \citep{nesterov2012efficiency, wright2015coordinate, richtarik2011iteration}, using the $\ell_2$ norm corresponds to normalized Gradient Descent (\algnamesmall{$\|$GD$\|$}) \citep{nesterov1984minimization, hazan2015beyond, Momentum_Norm_SGD_2020}, and choosing the $\ell_\infty$ norm yields Sign Gradient Descent (\algnamesmall{SignGD}) \citep{riedmiller1993direct, bernstein2018signsgd, safaryan2021stochastic}. Additional connections and discussion can be found in \Cref{sec:lit_rev} and \Cref{sec:lin_bpm}.

In the context of this paper, a particularly illustrative case arises when momentum is disabled and full gradients are used instead. Under this setting, \eqref{eq:muon_scion} reduces to
\begin{align}\label{eq:muon_scion_det}
    x_{k+1} &= x_k + t_k \lmo{\cB(0,1)}{\nabla f(x_k)}
    = \argmin \limits_{z \in \cB(x_k,t_k)} \inp{\nabla f(x_k)}{z} \nonumber \\
    &= \argmin \limits_{z \in \cB(x_k,t_k)} \brac{f(x_k) + \inp{\nabla f(x_k)}{z - x_k}} \nonumber \\
    &= \argmin \limits_{z \in \cS} \brac{f_k(z): {\color{myblue} \norm{z - x_k} \leq t_k}},
\end{align}
where $f_k(z) \eqdef f(x_k) + \inp{\nabla f(x_k)}{z - x_k}$ is the linearization of $f$ at the current iterate $x_k$.

This perspective reveals a deep structural similarity between~\eqref{eq:muon_scion}, \eqref{eq:muon_scion_det}, and the update rule of \ref{eq:bpm_e}, with two key differences:
\begin{enumerate}[label=(\roman*)]
    \item \ref{eq:bpm_e} minimizes the true objective $f$, whereas~\eqref{eq:muon_scion_det} minimizes its linear approximation $f_k$,
    \item \ref{eq:bpm_e} performs the minimization over a Euclidean ball, whereas~\eqref{eq:muon_scion_det} does so over a general norm ball.
\end{enumerate}
In this work, we do not pursue the model-based aspect of point~(i). Instead, we focus on (ii), investigating the consequences of generalizing the ball geometry in \algnamesmall{BPM} beyond Euclidean norms.

\subsection{Non-Euclidean BPM}\label{sec:non_euclidean_bpm}

Building on these insights, we propose a direct generalization of the Broximal Point Method by simply replacing the Euclidean norm in the constraint with an arbitrary norm $\norm{\cdot}$. This leads to the Non-Euclidean Broximal Point Method, with the following update rule:
\begin{align}\label{eq:bpm_ne}
    x_{k+1} = \argmin \limits_{z \in \cS} \brac{f(z): {\color{myblue} \norm{z - x_k} \leq t_k}}. \tag{Non-Euclidean \algnamesmall{BPM}}
\end{align}

Like its Euclidean predecessor, this method is inherently \emph{conceptual} in nature--the broximal operator may be difficult to compute exactly. Nonetheless, it \emph{can} be made practical through various approximations, for example:
\begin{itemize}
    \item \textbf{Solving subproblems approximately:} Use an iterative solver to approximately minimize the original objective $f$ over the ball $\cB(x_k, t_k)$,
    \item \textbf{Solving approximate subproblems:} Replace $f$ with some simpler \emph{model}, such as its linearization $f_k$, and solve the resulting (often tractable) trust region subproblem instead.
\end{itemize}
As we have already seen in \Cref{sec:muon_intro}, the latter approach admits closed-form solutions in certain settings. Crucially, the effectiveness of this strategy is not confined to the Euclidean setting, and its power becomes most apparent in the non-Euclidean contexts. Indeed, this is precisely the mechanism underlying the updates of \algnamesmall{Muon} and \algnamesmall{Scion}, as shown in~\eqref{eq:muon_scion_det}. The \algnamesmall{Muon} family thus represents just one concrete instantiation of the broad spectrum of approximations that can be captured and analyzed within our framework.

In a broader context, \algnamesmall{Muon} and \algnamesmall{Scion} can be interpreted as instances of a non-Euclidean trust region method \citep{conn2000trust} (see \Cref{sec:lit_rev}) applied to a linear model of the objective function, as first noted by \citet{kovalev2025understanding}. Meanwhile, both Euclidean and Non-Euclidean \algnamesmall{BPM} represent the \emph{idealized} trust region method, applied directly to the actual objective $f$ itself.

\subsubsection{Theoretical Guarantees}\label{sec:non_euclidean_bpm_theory}

Interestingly, Non-Euclidean \algnamesmall{BPM} preserves most (though not all) of the convergence guarantees established for its Euclidean counterpart. The following theorem, the main contribution of our work, demonstrates linear convergence in terms of function value suboptimality and monotonic decline of gradient norms:

\begin{restatable}{theorem}{THMMAIN}\label{thm:conv-nbpm}
    Assume that $f: \cS \mapsto \R \cup \{+\infty\}$ is proper, closed and convex, $\cX_\star \neq \emptyset$, and let $\{x_k\}_{k\geq0}$ be the iterates of Non-Euclidean \algnamesmall{BPM} run with any sequence of positive radii $\{t_k\}_{k\geq0}$, where $x_0\in {\rm dom} f$. Then 
    \begin{enumerate}[label=(\roman*)]
        \item If $\cX_{\star}\cap \cB(x_k, t_k)\neq\emptyset$, then $x_{k+1} \in \cX_{\star}$.\label{pt:n_opt}
        
        \item \label{pt:t_dist_nbpm}
        If $\cX_{\star}\cap \cB(x_k, t_k)=\emptyset$, then $\norm{x_{k+1} - x_k} = t_k$.
        
        \item \label{pt:1step_nbpm}
        For any $k\geq 0$, 
        \begin{eqnarray*}
            f(x_{k+1}) - f_{\star} \leq \parens{1 + \frac{t_k}{\norm{x_{k+1} - x_{\star}}}}^{-1} \parens{f(x_k) - f_{\star}}.
        \end{eqnarray*}

        \item If $f$ is differentiable, then $\norm{\nabla f(x_{k+1})}_{\star} \leq \norm{\nabla f(x_k)}_{\star}$ for all $k\geq 0$, and 
        \begin{eqnarray*}
            \squeeze \sum_{k=0}^{K-1} \parens{\frac{t_k}{\sum_{k=0}^{K-1} t_k} \norm{\nabla f(x_{k+1})}_{\star}}
            \leq \frac{f(x_0) - f_{\star}}{\sum_{k=0}^{K-1} t_k}.
        \end{eqnarray*}
    \end{enumerate}
\end{restatable}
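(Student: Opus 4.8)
The plan is to reuse the skeleton of the proof of \Cref{thm:bpm_e}, but to systematically replace each step that exploits the inner-product (parallelogram) identity of $\norm{\cdot}_2$ by an argument using only convexity of $f$ and the two elementary dual-norm facts $\inp{g}{v}\leq\norm{g}_\star\norm{v}$ and $\sup_{\norm{d}\leq1}\inp{g}{d}=\norm{g}_\star$. First one records well-posedness: each subproblem minimizes the proper, closed, convex $f$ over the compact set $\cB(x_k,t_k)$, which meets $\dom f$ (it contains $x_k$), so a minimizer $x_{k+1}$ exists; since $x_k$ is feasible, $f(x_{k+1})\leq f(x_k)$, hence by induction from $x_0\in\dom f$ every iterate lies in $\dom f$. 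Parts \ref{pt:n_opt}--\ref{pt:t_dist_nbpm} are geometry-agnostic and transfer unchanged: for \ref{pt:n_opt}, if some $\bar x\in\cX_\star$ lies in $\cB(x_k,t_k)$ then $f_\star\leq f(x_{k+1})\leq f(\bar x)=f_\star$; for \ref{pt:t_dist_nbpm}, if $\norm{x_{k+1}-x_k}<t_k$ then $x_{k+1}\in\interior\cB(x_k,t_k)$ is a local --- hence, by convexity, global --- minimizer of $f$, contradicting $\cX_\star\cap\cB(x_k,t_k)=\emptyset$, and combined with feasibility this forces $\norm{x_{k+1}-x_k}=t_k$. (Note we do not claim $x_{k+1}$ is a singleton, which genuinely fails for non-strictly-convex balls.)

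For part \ref{pt:1step_nbpm}, fix $x_\star\in\cX_\star$. If $f(x_{k+1})=f_\star$ the bound is trivial, so assume $x_{k+1}\notin\cX_\star$; by \ref{pt:n_opt} this forces $\cX_\star\cap\cB(x_k,t_k)=\emptyset$, hence $\norm{x_\star-x_k}>t_k$. Set $\mu\eqdef t_k/\norm{x_\star-x_k}\in(0,1)$ and $w\eqdef x_k+\mu(x_\star-x_k)=(1-\mu)x_k+\mu x_\star$; then $\norm{w-x_k}=t_k$, so $w\in\cB(x_k,t_k)$ and therefore $f(x_{k+1})\leq f(w)$. Convexity of $f$ along $[x_k,x_\star]$ gives $f(w)\leq(1-\mu)f(x_k)+\mu f_\star$, i.e.
\begin{align*}
    f(x_{k+1})-f_\star \;\leq\; (1-\mu)\big(f(x_k)-f_\star\big) \;=\; \frac{\norm{x_\star-x_k}-t_k}{\norm{x_\star-x_k}}\big(f(x_k)-f_\star\big).
\end{align*}
It then remains to check that $\tfrac{\norm{x_\star-x_k}-t_k}{\norm{x_\star-x_k}}\leq\big(1+\tfrac{t_k}{\norm{x_{k+1}-x_\star}}\big)^{-1}$; clearing denominators (all positive, since $x_{k+1}\neq x_\star$ here) reduces this exactly to the triangle inequality $\norm{x_\star-x_k}\leq\norm{x_\star-x_{k+1}}+\norm{x_{k+1}-x_k}$ together with feasibility $\norm{x_{k+1}-x_k}\leq t_k$. (Alternatively one can mimic the original proof via a subgradient $g\in\partial f(x_{k+1})\cap\big(-N_{\cB(x_k,t_k)}(x_{k+1})\big)$, for which $\inp{g}{x_k-x_{k+1}}=t_k\norm{g}_\star$; the surrogate-point route is preferable as it needs no constraint qualification.)

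For part (iv), let $f$ be differentiable. The first-order optimality condition $\inp{\nabla f(x_{k+1})}{z-x_{k+1}}\geq0$ for all $z\in\cB(x_k,t_k)$, applied to $z=x_k+t_kd$ and maximized over $\norm{d}\leq1$, combined with Hölder, yields the alignment identity $\inp{\nabla f(x_{k+1})}{x_k-x_{k+1}}=t_k\norm{\nabla f(x_{k+1})}_\star$ (trivially true when $\nabla f(x_{k+1})=0$, i.e. in the interior case). Convexity then gives the per-step descent $f(x_{k+1})\leq f(x_k)-t_k\norm{\nabla f(x_{k+1})}_\star$; summing over $k=0,\dots,K-1$, using $f(x_K)\geq f_\star$ and dividing by $\sum_j t_j$ delivers the stated weighted-average gradient bound. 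For monotonicity, substitute the alignment identity into the gradient-monotonicity inequality $\inp{\nabla f(x_{k+1})-\nabla f(x_k)}{x_{k+1}-x_k}\geq0$ and apply Hölder to $\nabla f(x_k)$:
\begin{align*}
    -t_k\norm{\nabla f(x_{k+1})}_\star=\inp{\nabla f(x_{k+1})}{x_{k+1}-x_k}\geq\inp{\nabla f(x_k)}{x_{k+1}-x_k}\geq-t_k\norm{\nabla f(x_k)}_\star ,
\end{align*}
and divide by $-t_k<0$.

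I expect parts \ref{pt:n_opt}--\ref{pt:1step_nbpm} to be routine once the surrogate point $w$ is spotted; the genuinely delicate point is the gradient-norm monotonicity in part (iv). This is precisely where the Euclidean proof tacitly used self-duality of $\norm{\cdot}_2$, and in the general setting it has to be routed through the alignment identity --- the observation that the update direction $x_{k+1}-x_k$ is, up to scaling, a linear-minimization-oracle output for $\nabla f(x_{k+1})$ --- while keeping primal and dual norms straight and isolating the degenerate $\nabla f(x_{k+1})=0$ case. It is also worth flagging what does \emph{not} carry over: the Euclidean distance contraction $\norm{x_{k+1}-x_\star}_2^2\leq\norm{x_k-x_\star}_2^2-t_k^2$, and hence the finite-step termination it implies, rely on expanding a squared norm via the inner product and have no analogue for a general norm --- which is exactly why \Cref{thm:conv-nbpm} retains ``most, though not all'' of the guarantees of \Cref{thm:bpm_e}.
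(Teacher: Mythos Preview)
Your proof is correct. Parts \ref{pt:n_opt}, \ref{pt:t_dist_nbpm}, and (iv) essentially match the paper's arguments: the paper packages (i)--(ii) into a standalone lemma about minimizing a closed convex function over a closed convex set, and for (iv) derives the alignment identity $\inp{\nabla f(x_{k+1})}{x_k-x_{k+1}}=t_k\norm{\nabla f(x_{k+1})}_\star$ by first invoking the normal-cone characterization $-\nabla f(x_{k+1})\in\cN_{\cB(x_k,t_k)}(x_{k+1})$ and then a separate lemma computing that cone, whereas you obtain it directly from first-order optimality plus H\"older---same content, slightly lighter machinery.

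The genuine divergence is in part \ref{pt:1step_nbpm}. The paper's main proof runs through the subgradient route you mention parenthetically: it produces $g\in\cN_{\cB(x_k,t_k)}(x_{k+1})$ with $-g\in\partial f(x_{k+1})$ (this is where a constraint qualification enters, handled via a relative-interior argument), uses the alignment $\inp{g}{x_{k+1}-x_k}=t_k\norm{g}_\star$ to get $f(x_{k+1})-f_\star\leq f(x_k)-f_\star-t_k\norm{g}_\star$, bounds $\norm{g}_\star$ from below via $f(x_{k+1})-f_\star\leq\norm{g}_\star\norm{x_{k+1}-x_\star}$, and rearranges. Your surrogate-point argument instead first proves the sharper-looking bound $f(x_{k+1})-f_\star\leq(1-t_k/\norm{x_k-x_\star})(f(x_k)-f_\star)$ and then deduces the stated inequality via the triangle inequality. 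Amusingly, the paper presents exactly your intermediate bound as a \emph{separate} theorem immediately after the main proof (\Cref{thm:con_cbrox}), attributing the technique to \citet{carmon2020acceleration}, but does not observe that it implies \ref{pt:1step_nbpm}. Your route is more elementary---no subdifferential calculus, no constraint qualification---and in fact unifies two results the paper states independently.
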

\Cref{thm:conv-nbpm} presents a direct analogue to the convergence result for function values and gradient norms in the Euclidean case (\eqref{eq:f_conv_e} and \eqref{eq:grad_conv_e}). Consequently, all the observations made in \Cref{thm:bpm_e} remain applicable in this context. Similar guarantees can also be derived for a certain class of non-convex functions, as discussed in \citet{gruntkowska2025ball}. Importantly, the results in \Cref{thm:conv-nbpm} are not a mere generalization of those in \Cref{thm:bpm_e}; with a suitable choice of norm, they can in fact yield stronger convergence guarantees, as we discuss in \Cref{sec:norm_prec}.

As reiterated throughout this work, Non-Euclidean \algnamesmall{BPM} may not be directly implementable, and one may need to resort to approximations. Several prior works have explored the method arising by replacing the true objective with a linear model, as in \eqref{eq:muon_scion_det} (often motivated by the \algnamesmall{Muon} framework but allowing for arbitrary LMOs, not just those arising from the choice $\norm{\cdot} = \norm{\cdot}_{2\to2}$).
In particular, the recent work of \citet{kovalev2025understanding} interprets \eqref{eq:muon_scion_det} as a trust region method,\footnote{A trust region interpretation in the Euclidean case was previously discussed by \citet{gruntkowska2025ball}.} where the regions are defined by norm balls $\cB(x_k,t_k)$. In the star-convex and $L$-smooth setting, the author proves a convergence guarantee of the form
\begin{align*}
    f(x_k) - f_\star \leq \varepsilon \qquad\textnormal{in}\qquad \cO(\nicefrac{LD^2}{\varepsilon}) \textnormal{ steps},
\end{align*}
where $D>0$ is the diameter of ${\rm dom} f$ (\citet[Corollary 7]{kovalev2025understanding}). This matches the classical Gradient Descent rate for smooth convex problems (up to logarithmic factors) \citep{nesterov2018lectures}. The analysis requires the stepsizes to be $t_k = \cO(\nicefrac{\varepsilon}{LD})$, further reinforcing the point made in \Cref{rem:norm_gd}: replacing the exact model with an approximation necessarily leads to (potentially very conservative) stepsize bounds. Compared to our guarantees, the results of \citet{kovalev2025understanding} are significantly weaker. They offer no finite-time convergence, no superlinear behavior, not even a linear rate, rely on strong assumptions and use radii dependent on the target accuracy. 

This highlights the fundamental trade-off between exactness and implementability. The method studied by \citet{kovalev2025understanding} is effectively a trust region algorithm applied to a linearized model of $f$, that is, a first-order approximation of the idealized method we analyze. In contrast, by working directly with the exact model, Non-Euclidean \algnamesmall{BPM} enjoys substantially stronger convergence guarantees under minimal assumptions.

Comparing \Cref{thm:bpm_e} and \Cref{thm:conv-nbpm}, one important caveat arises: unlike in the Euclidean case, where the distance to the minimizer is guaranteed to decrease monotonically and linearly for any positive radii sequence $\{t_k\}_{k\geq0}$ (as established in \eqref{eq:d_conv_e}), this property fails to hold in general normed spaces. In fact, even for convex objective functions, the distance to the solution set $\cX_{\star}$ may \emph{increase} when moving from $x_k$ to $x_{k+1}$, unless the radius $t_k$ is sufficiently large (in the extreme case when $t_0 \geq \textnormal{dist}(x_0, \cX_{\star})$, the method reaches the minimizer in a single step, just as in the Euclidean setting). A simple example illustrating this behavior is provided in~\Cref{fig:example}.
Having said that, a distance-based convergence guarantee analogous to~\eqref{eq:d_conv_e} can still be recovered, provided that the norm is induced by an inner product (see \Cref{thm:conv-ebrox} in \Cref{sec:ellipsoids}).

\begin{figure}[ht]
    \includegraphics[width=0.5\linewidth, valign=c]{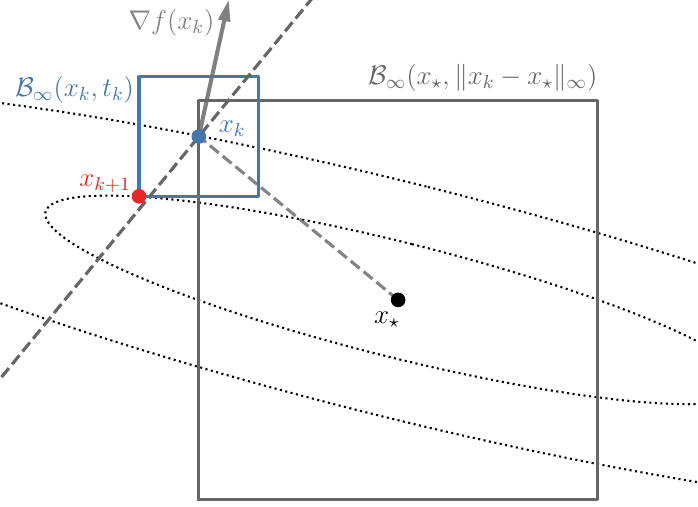}
    \hfill
    \begin{minipage}{\dimexpr 0.45\linewidth-\columnsep}
        \caption{\textbf{In the non-Euclidean case, the distance to $\cX_{\star}$ need not decrease.} Let $\norm{\cdot} = \norm{\cdot}_{\infty}$ be the infinity norm, and consider a simple two-dimensional example illustrating the behavior of Non-Euclidean \algnamesmall{BPM} when applied to a convex quadratic objective. The level sets of the function $f$ are depicted as {\color{gray} gray} dotted ellipses. At each iteration, the algorithm minimizes $f$ over the $\ell_{\infty}$ ball ${\color{myblue} \cB_{\infty}(x_k, t_k)}$ centered at the current iterate {\color{myblue} $x_k$}, and moves to the minimizer within this region, denoted {\color{myred2} $x_{k+1}$}.
        We observe that $x_{k+1}$ does not lie within the $\ell_\infty$ ball centered at $x_\star$ with radius $\norm{x_k - x_\star}_\infty$, indicating that the distance to the solution increases.
        This example highlights that in non-Euclidean geometries, monotonic progress toward the solution cannot be guaranteed, even for convex problems.}
        \label{fig:example}
    \end{minipage}
\end{figure}

\subsubsection{Why the Norm Matters: Geometric Preconditioning}\label{sec:norm_prec}

A natural question remains: beyond increased generality, does the result in \Cref{thm:conv-nbpm} offer any additional theoretical advantages over its Euclidean counterpart in \Cref{thm:bpm_e}? In prior work on LMO-type algorithms with arbitrary norms \citep{pethick2025training, kovalev2025understanding, riabinin2025gluon}, non-Euclidean geometry has proven beneficial due to reliance on smoothness assumptions. In such settings, aligning the geometry with the problem structure can yield significantly smaller smoothness constants and, consequently, improved convergence rates.
Our analysis departs from that framework by sidestepping any reliance on a smoothness model. As a result, one might think that norm choice plays a less significant role. This is not the case. Even without smoothness-based reasoning, selecting an appropriate norm is still a critical factor, though for a different reason.

To illustrate this, let us focus on the first step of the algorithm and imagine that we can freely choose the geometry of the norm ball $\cB(x_0, t_0)$. To ensure a fair comparison, suppose that all balls under consideration have the same fixed volume $V>0$. The goal, naturally, is to select a geometry so that one step of the algorithm brings us as close to $x_\star$ as possible, ideally reaching $x_\star$ itself. This is always geometrically feasible: by ``stretching'' the ball in the right direction, we can ensure that $x_\star \in \cB(x_0, t_0)$ without changing the volume.
As a concrete example, consider the norm $\norm{x}_{\mX} \eqdef \sqrt{x^\top \mX x}$, where $\mX \in \R^{d \times d}$ is a symmetric positive definite matrix. The associated norm ball of radius $t_0$ centered at $x_0$ is the $d$-dimensional ellipsoid
\begin{align*}
    \cB_{\mX}(x_0, t_0) \eqdef \brac{z\in \R^d : \norm{z-x_0}_{\mX} \leq t_0}.
\end{align*}
If we are free to ``play'' with $\cB_{\mX}(x_0, t_0)$ (subject to the fixed volume constraint), we can construct $\mX$ so that the ball ``touches'' the solution, effectively solving the problem in one iteration. To this end, set $t_0 = \norm{x_0 - x_\star}_{\mX}$, which imposes the constraint $\norm{x_0 - x_\star}_{\mX}^d \det(\mX)^{-1/2} = \frac{V}{\vol(\cB_2(0,1))}$, where $\vol(\cB_2(0,1))$ is the volume of the $d$-dimensional unit Euclidean ball. This equation always admits a solution. For instance, one can choose
\begin{align*}
    \mX = \parens{\frac{V}{\norm{x_0 - x_\star}_{\mX}^d \vol(\cB_2(0,1))}}^{\frac{2}{d-1}} \mP + \mP^\perp,
\end{align*}
where $\mP = \frac{(x_0 - x_\star) (x_0 - x_\star)^\top}{\norm{x_0 - x_\star}_2^2}$ (see \Cref{thm:vol_arg}). This choice guarantees that $x_\star \in \cB(x_0, t_0)$, satisfying both the geometric and volume constraints.

If the direction $x_0-x_\star$ were known beforehand, one could exploit this information to align the geometry accordingly, leading to extremely fast convergence. Of course, perfect knowledge of the solution is not available in practice (if it were, there would be no need for an iterative algorithm at all). Nevertheless, even partial prior information about the problem can guide the design of more effective transformations, resulting in improved practical performance.

The above construction can be interpreted as a new form of \emph{preconditioning} \citep{hestenes1952methods, benzi2002preconditioning}, with the matrix $\mX$ playing the role of the \emph{preconditioner} (see \Cref{sec:lit_rev}). More generally, norm selection goes beyond this classical approach, as it is not restricted to Mahalanobis-type metrics. In this sense, it serves as \emph{geometric preconditioning}--a nonparametric, non-Euclidean analogue of preconditioning. This geometric flexibility may help explain the empirical success of the methods discussed in this work: by adapting to the underlying geometry, they implicitly tend to yield better-conditioned optimization problems.

The idea of using knowledge about the distance to the solution is somewhat reminiscent of the recent breakthrough in optimization--\algnamesmall{D-adaptation} \citep{defazio2023learning}, recipient of the 2023 ICML Outstanding Paper Award. That method incorporates an estimate of the distance $D = \norm{x_0-x_\star}$ into its algorithmic design by iteratively constructing lower bounds on $D$, which are then used to guide adaptive stepsize selection.
Although the two approaches differ significantly---\citet{defazio2023learning} operate under a standard gradient oracle and cannot achieve our type of results---they share a common principle: distance matters. In our setting, this is reflected in how norm choice implicitly encodes both geometry and stepsize.
While the settings and mechanisms are distinct, both highlight the fundamental role that distance to the optimum plays in efficient optimization.

Last but not least, the norm choice can be highly influential when analyzing the linearized variant of Non-Euclidean \algnamesmall{BPM}. Indeed, if an analogue of the result in \Cref{rem:norm_gd} were to hold in the non-Euclidean setting, the upper bound on the stepsize would be strongly dictated by the norm appearing in the denominator.

\section{Related Work}\label{sec:lit_rev}

\paragraph{Ball Oracles.}

Ball oracles---subroutines that minimize a function over a norm ball---have been central to several recent advances in optimization. Prior to the method proposed by \citet{gruntkowska2025ball}, which is the focus of the main part of this paper, \citet{carmon2020acceleration} introduced a framework for acceleration based on this primitive, achieving near-optimal complexity guarantees. The approach has since been adapted to various settings: \citet{carmon2021thinking} and \citet{asi2021stochastic} applied it to the problem of minimizing the maximum loss, while \citet{carmon2023resqueing} and \citet{jambulapati2024closing} developed parallel methods. Subsequently, \citet{carmon2022optimal} tightened convergence bounds by improving logarithmic dependencies, and \citet{adil2024convex} extended the framework to $\ell_p$ norm balls.

\paragraph{Preconditioned Gradient Methods.}

Standard stochastic gradient methods can converge very slowly when applied to ill-conditioned problems. A classical remedy is \emph{preconditioning}, a well-established technique in optimization and numerical linear algebra that accelerates convergence by transforming the problem geometry to reduce ill-conditioning. In the context of first-order methods, this is achieved by modifying the standard update rule by introducing a sequence of symmetric positive definite matrices $\{\mX_k\}_{k\geq0}$, known as \emph{preconditioners}, which scale the gradient direction. This corresponds to performing steepest descent in the norm $\norm{x}_{\mX_k} \eqdef \sqrt{x^\top \mX_k x}$, yielding the update
\begin{align*}
    x_{k+1} = x_k - \gamma_k \mX_k^{-1} g_k,
\end{align*}
where $g_k$ is a (stochastic) gradient estimate and $\gamma_k > 0$ is a stepsize.
This idea dates back to early works in numerical optimization \citep{hestenes1952methods}, and has since been extensively studied in both deterministic and stochastic settings. The choice of $\mX_k$ plays a critical role: fixed preconditioners based on curvature approximations are often used in convex problems, while adaptive or data-driven variants are more common in machine learning. Prominent examples in the latter category include \algnamesmall{AdaGrad} \citep{duchi2011adaptive}, \algnamesmall{Adam} \citep{kingma2014adam}, \algnamesmall{BFGS} \citep{gower2016stochastic, gower2018accelerated, kovalev2020fast}, and \algnamesmall{Shampoo} \citep{gupta2018shampoo}. Preconditioned methods also form the basis for second-order and quasi-Newton algorithms \citep{gill1972quasi, lewis2013nonsmooth, gower2017randomized, bottou2018optimization, kovalev2019stochastic, islamov2023distributed}. All of these approaches can be interpreted as variants of Stochastic Gradient Descent performed in a dynamically rescaled coordinate system, leading to improved robustness and faster convergence in practice.

\paragraph{Coordinate Descent Methods.}

Coordinate Descent (\algnamesmall{CD}) algorithms have a long and rich history \citep{southwell1940relaxation, powell1973search, zhiquan1993error, shalevshwartz2009Stochastic, richtarik2011iteration, richtarik2012parallel, nesterov2012efficiency, richtarik2013distributed, fercoq2013accelerated, wright2015coordinate, qu2016coordinate, nesterov2017efficiency, sigma_k}. At a high level, these methods iteratively optimize the objective function by fixing most components of the parameter vector and updating a selected subset only. By focusing on one coordinate (or one \emph{block} of coordinates) at a time, they decompose high-dimensional problems into a sequence of simpler, lower-dimensional subproblems.
In their basic form, \algnamesmall{CD} methods applied to problem \eqref{eq:problem} with $\cS = \R^d$ proceed by selecting a subset (called \emph{block}) $b_k \subseteq [d]$, and updating the corresponding coordinates $x^{b_k}_k \in \R^{|b_k|}$ according to
\begin{align*}
    x^{b_k}_{k+1} = x^{b_k}_k - \gamma_k g^{b_k}_k,
\end{align*}
where $\gamma_k>0$ is the stepsize and $g^{b_k}_k \in \R^{|b_k|}$ is a suitably chosen descent direction for the lower-dimensional subproblem. The remaining coordinates of $x_k$ are left unchanged.
This strategy often leads to substantial reductions in per-iteration computational and memory costs, making \algnamesmall{CD} methods both scalable and easy to implement. Furthermore, their amenability to parallelization and ability to exploit problem structure have made Block Coordinate Descent (\algnamesmall{BCD}) particularly attractive for large-scale optimization problems \citep{richtarik2011iteration, richtarik2012parallel, beck2013convergence, nutini2017let}.

\paragraph{Sign Descent Methods.}

Sign-based optimization methods (\algnamesmall{SignSGD}) \citep{bernstein2018signsgd} originated from efforts to simplify and accelerate large-scale optimization, and have gained traction in machine learning due to their low communication overhead and surprisingly strong empirical performance in neural network training. The idea, popularized by algorithms like \algnamesmall{RPROP} \citep{riedmiller1993direct}, is to replace the full gradient with its element-wise sign, retaining directional information while discarding magnitudes. This results in a highly compressed gradient representation, making the approach very attractive for distributed or large-scale settings.
Methods in this family perform updates of the form
\begin{align*}
    x_{k+1} = x_k - \gamma_k \sign\parens{g_k},
\end{align*}
where $g_k$ is a (stochastic) gradient estimate and $\sign(\cdot)$ is applied component-wise. 
Interest in sign-based methods surged in the past decade, partly due to their close connection to adaptive optimizers such as \algnamesmall{Adam} \citep{kingma2014adam}. In fact, when exponential moving averages are disabled, \algnamesmall{Adam} reduces to \algnamesmall{SignSGD} \citep{balles2018dissecting, balles2020geometry}. Sign descent methods have since been the subject of extensive analysis, with recent works investigating their convergence properties, limitations, and interpretations \citep{karimireddy2019error, safaryan2021stochastic, kunstner2023noise, bernstein2024old}.

\paragraph{LMO-based Optimizers.}

A classical family of optimization methods based on Linear Minimization Oracles (LMOs) are the Frank-Wolfe (\algnamesmall{FW}) algorithms, also known as Conditional Gradient methods \citep{frank1965algorithm, jaggi2013revisiting}. Originally designed for constrained optimization, \algnamesmall{FW} algorithms replace costly projection or proximal steps with linear minimization over the feasible set, making them particularly attractive in high-dimensional problems where projections are expensive or intractable.

In recent years, LMO-based optimizers have been adapted to the deep learning context. Algorithms of this type iterate by minimizing surrogate models (e.g., linearizations of the loss) over non-Euclidean norm balls. This strategy seeks to better capture layer-wise structure and directional anisotropy in the loss landscape through the careful selection of norms, and has led to strong empirical performance in training deep neural networks \citep{liu2025muon, pethick2025training, riabinin2025gluon, shah2025practical, therien2025muloco, tveit2025muon}. Notable examples of such optimizers include \algnamesmall{Muon} \citep{jordan2024muon} and \algnamesmall{Scion} \citep{pethick2025training}. The \algnamesmall{Muon} optimizer was initially introduced as an effective empirical method for optimizing hidden layers (with other optimizers, typically \algnamesmall{AdamW}, applied to the first and last layers). Later, \citet{pethick2025training} formally connected such updates to the \algnamesmall{FW} framework and proposed \algnamesmall{Scion}, which employs LMO-based updates across all layers, using layer-specific norms. Subsequent theoretical works \citep{kovalev2025understanding, li2025note} have analyzed simplified global variants of these optimizers under standard $L$-smoothness assumption. Building on this, \citet{riabinin2025gluon} advanced the theory by establishing convergence guarantees under a more realistic layer-wise $(L_0,L_1)$-smoothness assumption, which better reflects the practical layer-wise nature of these methods.

While \algnamesmall{Muon} and \algnamesmall{Scion} predominantly rely on spectral norms, other choices are possible. In particular, considering $\ell_p$ norms recovers the previously discussed coordinate descent methods (for $p=1$) and sign descent methods (for $p=\infty$), which we elaborate on in \Cref{sec:lin_bpm}.

\paragraph{Trust Region Methods.}

Trust region methods are a well-established family of optimization algorithms that minimize an objective function $f$ by iteratively solving simpler surrogate problems within a localized neighborhood of the current iterate. At each step, these algorithms construct a model $m_k(x)$, typically a quadratic approximation of $f$, that is assumed to be reliable within a specified region, known as the \emph{trust region}, around the current point $x_k$ \citep{conn2000trust}. This region is most commonly defined as a norm ball $\cB(x_k, t_k) \eqdef \brac{z\in\R^d: \norm{z-x_k} \leq t_k}$, where $t_k$ is the \emph{trust region radius}, though more sophisticated variants may employ ellipsoidal or box-shaped regions to better align with the problem's geometry. The next iterate $x_{k+1}$ is obtained by minimizing the model $m_k(x_k)$ over this region. After each step, the quality of the approximation is evaluated and the trust region radius is adjusted accordingly.

In this context, LMO-based optimizers such as \algnamesmall{Muon} and \algnamesmall{Scion} can be viewed as non-Euclidean trust region methods applied to a linear and stochastic approximation of $f$, whereas \algnamesmall{BPM} represents an idealized trust region method that operates directly on the true objective.

\section{Conclusion}\label{sec:conclusion}

In this work, inspired by recent breakthroughs in the design of optimizers capable of iteratively minimizing a linear approximation of the objective function over balls defined via arbitrary norms, we focus on extending the recently proposed Broximal Point Method (\algnamesmall{BPM}) \citep{gruntkowska2025ball} to the non-Euclidean setting. 

The resulting Non-Euclidean \algnamesmall{BPM} offers an idealized meta-algorithm with deep links to a growing family of geometry-aware optimizers. While practical methods like \algnamesmall{Muon} and \algnamesmall{Scion} operate on linear surrogates of the objective and rely on implementable LMOs, our method replaces these approximations with exact subproblem solutions, revealing the structural essence that underlies their success. In doing so, it provides a conceptual blueprint and makes a step towards clarifying the role of geometry in shaping optimization trajectories and global convergence properties.

Naturally, some of the remarkable guarantees achievable in the Euclidean case cannot be extended to arbitrary norm geometries. We explicitly demonstrate why such results may break down. We also leave out important practical aspects such as stochastic gradients, momentum, and non-convexity--all of which are central to modern optimization methods \citep{jordan2024muon, pethick2025training, riabinin2025gluon}. Incorporating these practically relevant components is an important direction for future work; here, however, our emphasis is on the clean deterministic setting.

Nonetheless, the broader picture remains: Non-Euclidean \algnamesmall{BPM} enriches our theoretical toolkit, offering a platform for designing and analyzing new algorithms. We view this work as a step towards a deeper theoretical foundation for the emerging class of geometry-aware optimization algorithms, and as a source of simple yet elegant inspiration for future developments.

\section*{Acknowledgements}

The research reported in this publication was supported by funding from King Abdullah University of Science and Technology (KAUST): i) KAUST Baseline Research Scheme, ii) CRG Grant ORFS-CRG12-2024-6460, and iii) Center of Excellence for Generative AI, under award number 5940.

\bibliographystyle{plainnat}
\bibliography{references}

\newpage

\tableofcontents

\newpage

\appendix

\section*{APPENDIX}

\section{Useful Facts and Lemmas}

In this section, we collect key definitions and present several fundamental results needed for the main convergence proof in \Cref{sec:main_proof}. We begin by formalizing our notation.

Throughout, we let ${\rm dom} f \eqdef \{x \in \cS : f(x)<+\infty\}$ denote the domain of a function $f: \cS \to \R \cup \{+\infty\}$. Given a set $\cC \subseteq \cS$, we denote its boundary, interior, and relative interior by $\bdry(\cC)$, $\interior(\cC)$, and $\ri(\cC)$, respectively.

For any nonempty, closed, and convex set $\cC \subseteq \cS$, we define its \emph{indicator function} $\delta_{\cC}: \cS \to \R \cup \{+\infty\}$ by
\begin{align*}
    \delta_{\cC}(z) \eqdef
    \begin{cases}
        0 & \text{if } z\in \cC, \\
        +\infty & \text{otherwise}.
    \end{cases}
\end{align*}
This function is proper, closed, and convex.

\begin{definition}[Subdifferential]
    Let $f: \cS \mapsto \R \cup \{+\infty\}$ be proper and let $x \in {\rm dom}(f)$. The \emph{subdifferential} of $f$ at $x$, denoted as $\partial f(x)$, is the set of vectors $g\in\cS$ such that
    \begin{align*}
        f(y) \geq f(x) + \inp{g}{y-x} \qquad \forall y\in\cS.
    \end{align*}
    The elements of $\partial f(x)$ are called the \emph{subgradients} of $f$ at $x$.
\end{definition}

\begin{fact}\label{fact:conv_ball_norm}
    In any normed space $(\cS, \norm{\cdot})$, the closed ball $\cB(x,t) \eqdef \brac{z: \norm{z-x} \leq t}$ is convex.
\end{fact}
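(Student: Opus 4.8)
The plan is to verify convexity directly from the definition, using only the two defining properties of a norm that are relevant here: absolute homogeneity and the triangle inequality. The statement is elementary, so the proof will be short; the only "obstacle" is making sure each inequality is justified by the correct axiom.

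First I would fix two arbitrary points $z_1, z_2 \in \cB(x,t)$, so that $\norm{z_1 - x} \leq t$ and $\norm{z_2 - x} \leq t$, and an arbitrary $\lambda \in [0,1]$. The goal is to show that the convex combination $w \eqdef \lambda z_1 + (1-\lambda) z_2$ again lies in $\cB(x,t)$, i.e.\ that $\norm{w - x} \leq t$. The key algebraic observation is the identity $w - x = \lambda(z_1 - x) + (1-\lambda)(z_2 - x)$, which follows because the coefficients $\lambda$ and $1-\lambda$ sum to one.

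Next I would apply the triangle inequality to the right-hand side, giving $\norm{w-x} \leq \norm{\lambda(z_1-x)} + \norm{(1-\lambda)(z_2-x)}$, and then use absolute homogeneity together with $\lambda, 1-\lambda \geq 0$ to pull the scalars out, obtaining $\norm{w-x} \leq \lambda\norm{z_1-x} + (1-\lambda)\norm{z_2-x}$. Finally, bounding each norm by $t$ and using $\lambda t + (1-\lambda) t = t$ yields $\norm{w-x} \leq t$, so $w \in \cB(x,t)$. Since $z_1, z_2, \lambda$ were arbitrary, $\cB(x,t)$ is convex. No step here is genuinely difficult; the proof is simply a careful bookkeeping of the norm axioms, and the same argument covers the open ball verbatim.
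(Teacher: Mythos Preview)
Your proof is correct and is the standard argument. The paper itself does not supply a proof for this fact; it is stated without proof as a well-known elementary property of normed spaces, so there is nothing to compare against.
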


\begin{fact}[Subdifferential of indicator function]\label{lemma:subfiff_id}
    Let $\cC \subseteq \cS$ be a nonempty convex set. The subdifferential of an indicator function of $\cC$ at a point $y \in \cC$ is
    \begin{align*}
        \partial \delta_{\cC}(y) = \cN_{\cC}(y) \eqdef \brac{g\in\cS : \inp{g}{z - y} \leq 0 \,\forall z \in \cC},
    \end{align*}
    where $\cN_{\cC}(y)$ is the normal cone of $\cC$ at $y$.
\end{fact}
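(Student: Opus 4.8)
The plan is to establish the set equality $\partial \delta_\cC(y) = \cN_\cC(y)$ by directly unfolding the definition of the subdifferential, observing that both sides are described by the same family of linear inequalities indexed by the points of $\cC$. This is exactly the kind of elementary identity that will be combined with the Fermat rule to read off the optimality conditions for the broximal subproblem in the proof of \Cref{thm:conv-nbpm}.

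First I would write out what it means for $g \in \cS$ to belong to $\partial \delta_\cC(y)$: by the definition of the subdifferential, this holds precisely when $\delta_\cC(z) \geq \delta_\cC(y) + \inp{g}{z - y}$ for all $z \in \cS$. Since $y \in \cC$ we have $\delta_\cC(y) = 0$, so the condition reduces to $\delta_\cC(z) \geq \inp{g}{z - y}$ for all $z \in \cS$.

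The next step is to split this requirement according to whether $z \in \cC$. If $z \notin \cC$, then $\delta_\cC(z) = +\infty$ and the inequality is automatic, since $\inp{g}{z - y}$ is a finite real number. If $z \in \cC$, then $\delta_\cC(z) = 0$ and the inequality becomes $\inp{g}{z - y} \leq 0$. Hence $g \in \partial \delta_\cC(y)$ if and only if $\inp{g}{z - y} \leq 0$ for every $z \in \cC$, which is precisely the definition of $g \in \cN_\cC(y)$. Both inclusions follow simultaneously, yielding the claimed identity.

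I do not anticipate any genuine obstacle here: the argument is pure bookkeeping of definitions. It is worth noting in passing that convexity of $\cC$ is not actually needed for the displayed equality itself -- it is relevant only insofar as it makes $\delta_\cC$ a proper convex function (and closed when $\cC$ is closed), which is what legitimizes the later use of subdifferential calculus in \Cref{thm:conv-nbpm}. One might also remark that $\cN_\cC(y)$ is automatically a convex cone, being an intersection of homogeneous half-spaces through the origin, but this is a side observation rather than a step of the proof.
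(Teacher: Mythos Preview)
Your argument is correct: unfolding the subgradient inequality $\delta_\cC(z) \geq \delta_\cC(y) + \inp{g}{z-y}$, using $\delta_\cC(y)=0$, and splitting on $z\in\cC$ versus $z\notin\cC$ is exactly the standard derivation of this identity. The paper itself states this as a \emph{Fact} without proof, treating it as a known result from convex analysis, so there is no approach to compare against; your write-up simply supplies the (routine) verification that the paper omits. Your side remark that convexity of $\cC$ is not needed for the set equality itself is also accurate.
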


\begin{fact}[Normal cone of a norm ball]\label{fact:cone_ball}
    Let $\norm{\cdot}$ be any norm on $\cS$. The normal cone of a ball $\cB(x,t) = \brac{z \in\cS: \norm{z-x} \leq t}$ at a point $y \in \cB(x,t)$ is
    \begin{align*}
        \cN_{\cB(x,t)}(y) = \brac{g \in\cS: t \norm{g}_{\star} \leq \inp{g}{y - x}},
    \end{align*}
    where $\norm{\cdot}_{\star}$ is the dual norm of $\norm{\cdot}$.
\end{fact}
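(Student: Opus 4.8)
The plan is to reduce the claim directly to the definition of the dual norm via a translation and a scaling; no deep machinery is needed. Fix $y \in \cB(x,t)$ and write $\cC \eqdef \cB(x,t)$. By the definition of the normal cone (as recorded in \Cref{lemma:subfiff_id}), a vector $g$ lies in $\cN_{\cC}(y)$ exactly when $\inp{g}{z-y} \leq 0$ for all $z \in \cC$, which I would rewrite as $\sup_{z\in\cC}\inp{g}{z} \leq \inp{g}{y}$.

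Next I would compute the left-hand supremum. Parametrizing $\cC$ as $z = x + t w$ with $\norm{w} \leq 1$ (here using $t>0$), linearity of the inner product gives $\sup_{z\in\cC}\inp{g}{z} = \inp{g}{x} + t\sup_{\norm{w}\leq 1}\inp{g}{w} = \inp{g}{x} + t\norm{g}_{\star}$, where the last step is verbatim the definition $\norm{g}_{\star} \eqdef \sup_{\norm{z}\leq 1}\inp{g}{z}$. Substituting this back, I conclude that $g \in \cN_{\cC}(y)$ if and only if $\inp{g}{x} + t\norm{g}_{\star} \leq \inp{g}{y}$, i.e.\ if and only if $t\norm{g}_{\star} \leq \inp{g}{y-x}$, which is exactly the asserted formula.

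There is essentially no obstacle here: the only thing to verify carefully is that translating by the fixed vector $x$ and scaling by the positive constant $t$ are bijections of the closed unit ball onto $\cC$, so that the supremum transforms as claimed — this is routine. (If one preferred, the same identity could also be extracted from the subdifferential calculus for indicator functions, combining \Cref{lemma:subfiff_id} with the structure of $\cB(x,t)$, but the one-line dual-norm computation above is cleaner and self-contained, and works for any norm without invoking smoothness or strict convexity of $\norm{\cdot}$.)
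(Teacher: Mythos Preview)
Your proposal is correct and follows essentially the same approach as the paper: both start from the normal-cone inequality $\sup_{z\in\cB(x,t)}\inp{g}{z}\le\inp{g}{y}$ and reduce the supremum to $\norm{g}_\star$ via the affine change of variables $z=x+tw$, arriving at $t\norm{g}_\star\le\inp{g}{y-x}$. The only cosmetic difference is that the paper subtracts $\inp{g}{x}$ and divides by $t$ before substituting, whereas you substitute first and rearrange afterward.
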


\begin{proof}
    Let $y \in \cB(x,t)$. Then
    \begin{align*}
        g \in \partial \delta_{\cB(x,t)}(y) \qquad&\overset{\eqref{lemma:subfiff_id}}{\iff}\qquad
        \inp{g}{z - y} \leq 0 \quad\forall z \in \cB(x,t) \\\qquad&\iff\qquad
        \inp{g}{z} \leq \inp{g}{y} \quad\forall z \in \cB(x,t) \\\qquad&\iff\qquad
        \sup_{z: \norm{z-x} \leq t} \inp{g}{z} \leq \inp{g}{y} \\\qquad&\iff\qquad
        \sup_{z: \norm{\frac{z - x}{t}} \leq 1} \inp{g}{\frac{z - x}{t}} \leq \inp{g}{\frac{y - x}{t}} \\\qquad&\iff\qquad
        \sup_{w: \norm{w} \leq 1} \inp{g}{w} \leq \inp{g}{\frac{y - x}{t}} \\\qquad&\iff\qquad
        \norm{g}_{\star} \leq \frac{\inp{g}{y - x}}{t},
    \end{align*}
    which finishes the proof.
\end{proof}

\begin{lemma}\label{lemma:n_cone_partial}
    Let $u \in \cB(x, t)$ and consider any $g\in\cN_{\cB(x, t)}(u)$. Then
    \begin{align*}
        \inp{g}{u - x} = \norm{g}_{\star} \norm{u - x}.
    \end{align*}
\end{lemma}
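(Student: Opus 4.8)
The statement to prove is Lemma~\ref{lemma:n_cone_partial}: if $u \in \cB(x,t)$ and $g \in \cN_{\cB(x,t)}(u)$, then $\inp{g}{u-x} = \norm{g}_{\star}\norm{u-x}$. The plan is to exploit Fact~\ref{fact:cone_ball}, which gives the explicit description $\cN_{\cB(x,t)}(u) = \brac{g \in \cS : t\norm{g}_{\star} \leq \inp{g}{u-x}}$, and combine it with the generic inequality coming from the definition of the dual norm, namely $\inp{g}{u-x} \leq \norm{g}_{\star}\norm{u-x}$, which holds for all $g$ and all $u-x$. Putting these together yields $\norm{g}_{\star}\norm{u-x} \geq \inp{g}{u-x} \geq t\norm{g}_{\star}$, so the only thing that can go wrong is the edge case; the meat of the argument is showing the chain collapses to an equality.

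First I would dispose of the degenerate case $g = 0$: then both sides are zero and the claim is trivial, so assume $\norm{g}_{\star} > 0$. Next I would split on whether $\norm{u-x} = t$ or $\norm{u-x} < t$. If $\norm{u-x} = t$, then from $t\norm{g}_{\star} \leq \inp{g}{u-x} \leq \norm{g}_{\star}\norm{u-x} = t\norm{g}_{\star}$ the two inequalities are squeezed into equalities, giving $\inp{g}{u-x} = \norm{g}_{\star}\norm{u-x}$ directly. If instead $\norm{u-x} < t$, then $t\norm{g}_{\star} \leq \inp{g}{u-x} \leq \norm{g}_{\star}\norm{u-x} < t\norm{g}_{\star}$, a strict contradiction with $\norm{g}_{\star} > 0$; hence this case forces $g = 0$, already handled, so in fact $\inp{g}{u-x} = 0 = \norm{g}_{\star}\norm{u-x}$ holds here too. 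Combining the cases completes the proof.

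The main obstacle, such as it is, is bookkeeping rather than mathematics: one must be careful that Fact~\ref{fact:cone_ball} is invoked with the center at $x$ (so the relevant quantity is $\inp{g}{u-x}$, not $\inp{g}{u}$) and that the dual-norm inequality $\inp{g}{w} \leq \norm{g}_{\star}\norm{w}$ is applied to the vector $w = u-x$ — this is immediate from $\norm{x}_{\star} = \sup_{\norm{z}\leq 1}\inp{x}{z}$ by normalizing $w$ when $w \neq 0$, and trivial when $w = 0$. No smoothness, convexity of $f$, or any structural assumption beyond the norm itself is needed; the lemma is a purely geometric fact about norm balls and will be used later to identify the broximal subgradient direction with the direction $x_{k+1} - x_k$ in the proof of Theorem~\ref{thm:conv-nbpm}.
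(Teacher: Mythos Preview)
Your proof is correct and follows essentially the same approach as the paper: combine the characterization of the normal cone from Fact~\ref{fact:cone_ball} with the dual-norm inequality to squeeze $\inp{g}{u-x}$ between $t\norm{g}_{\star}$ and $\norm{g}_{\star}\norm{u-x}$. The paper's version is slightly more compact, writing the single chain $t\norm{g}_{\star} \leq \inp{g}{u-x} \leq \norm{g}_{\star}\norm{u-x} \leq t\norm{g}_{\star}$ (the last step using $\norm{u-x}\leq t$) and concluding all inequalities are equalities at once, whereas you split into the cases $g=0$ and $\norm{u-x}<t$ versus $\norm{u-x}=t$; but this is a cosmetic difference, not a substantive one.
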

\begin{proof}
    By \Cref{fact:cone_ball}, we know that
    \begin{align*}
        \cN_{\cB(x, t)}(u) = \brac{g \in \cS: t \norm{g}_{\star} \leq \inp{g}{u - x}}.
    \end{align*}
    Therefore, using Cauchy-Schwarz inequality, for any $g\in\cN_{\cB(x, t)}(u)$ we have
    \begin{align*}
        t \norm{g}_{\star} \leq \inp{g}{u - x} \leq \norm{g}_{\star} \norm{u - x} \leq t \norm{g}_{\star}.
    \end{align*}
    Hence all inequalities must be equalities, which implies the claimed identity.
\end{proof}

\begin{theorem}\label{thm:n2ndlp}
    Let $f: \cS \mapsto \R \cup \{+\infty\}$ be proper, closed and convex. Choose $x \in {\rm dom} f$ and $u \in \argmin_{z \in \cB(x, t)} f(z)$, where $t>0$. Then there exists $g\in \cN_{\cB(x, t)}(u)$ such that $-g \in \partial f(u)$, i.e.,
    \begin{align*}
        f(y) - f(u) \geq \inp{g}{u-y}
    \end{align*}
    for all $y \in \cS$.
\end{theorem}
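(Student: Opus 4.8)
The plan is to turn the constrained minimization into an unconstrained one and apply Fermat's rule together with the Moreau--Rockafellar subdifferential sum rule. Since $u\in\argmin_{z\in\cB(x,t)}f(z)$, the point $u$ is an unconstrained minimizer over $\cS$ of $F\eqdef f+\delta_{\cB(x,t)}$, which is proper (because $u\in\dom f\cap\cB(x,t)$), closed and convex (by \Cref{fact:conv_ball_norm}, $\cB(x,t)$ is closed and convex, so $\delta_{\cB(x,t)}$ is proper, closed and convex). Hence $0\in\partial F(u)$. If we can split $\partial F(u)=\partial f(u)+\partial\delta_{\cB(x,t)}(u)$, then, since $\partial\delta_{\cB(x,t)}(u)=\cN_{\cB(x,t)}(u)$ by \Cref{lemma:subfiff_id}, we get $0=h+g$ with $h\in\partial f(u)$ and $g\in\cN_{\cB(x,t)}(u)$; that is, $-g\in\partial f(u)$ and $g\in\cN_{\cB(x,t)}(u)$. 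Unwinding the definition of the subgradient, $-g\in\partial f(u)$ reads $f(y)\ge f(u)+\inp{-g}{y-u}=f(u)+\inp{g}{u-y}$ for all $y\in\cS$, which is exactly the claimed inequality.

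The one delicate point is justifying the sum rule, which in general needs a constraint qualification; here it is handed to us by the hypotheses. Because $t>0$, the center $x$ lies in the interior of $\cB(x,t)$, so $\delta_{\cB(x,t)}$ is identically zero on a neighborhood of $x$ and is in particular continuous at $x$; and $x\in\dom f$ by assumption. Thus $x$ is a point of $\dom f\cap\dom\delta_{\cB(x,t)}$ at which $\delta_{\cB(x,t)}$ is continuous, which is precisely the hypothesis under which $\partial(f+\delta_{\cB(x,t)})(u)=\partial f(u)+\partial\delta_{\cB(x,t)}(u)$. I expect this observation---that ``$x\in\dom f$'' together with ``$t>0$'' supplies exactly the Slater-type qualification needed---to be the main (and essentially the only) nonroutine step; the rest is bookkeeping.

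A more self-contained route, avoiding any quoted sum rule, is separation in $\cS\times\R$. The convex sets $A\eqdef\epi f$ and $B\eqdef\cB(x,t)\times(-\infty,f(u)]$ are nonempty (both contain $(u,f(u))$) and have disjoint relative interiors: a common relative-interior point would be some $(z,\mu)$ with $z\in\cB(x,t)$ and $f(z)<\mu<f(u)$, contradicting optimality of $u$. Proper separation in finite dimensions then gives $(\eta,\beta)\ne 0$ with $\inp{\eta}{z}+\beta\mu\le\inp{\eta}{z'}+\beta\mu'$ for all $(z,\mu)\in A$ and $(z',\mu')\in B$. Sending $\mu\to+\infty$ forces $\beta\le 0$, and $\beta=0$ is impossible: it would give $\sup_{z\in\dom f}\inp{\eta}{z}\le\inf_{z'\in\cB(x,t)}\inp{\eta}{z'}=\inp{\eta}{x}-t\norm{\eta}_{\star}$, and using $x\in\dom f$ in the left-hand supremum yields $t\norm{\eta}_{\star}\le 0$, forcing $\eta=0$. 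Rescaling so that $\beta=-1$ and setting $g\eqdef-\eta$, the substitutions $z=u$ and then $z'=u$ into the separation inequality give $\inp{g}{z'-u}\le 0$ for all $z'\in\cB(x,t)$ (i.e. $g\in\cN_{\cB(x,t)}(u)$ by \Cref{lemma:subfiff_id}) and $f(z)\ge f(u)+\inp{-g}{z-u}$ for all $z\in\cS$ (i.e. $-g\in\partial f(u)$), respectively. Either argument closes the proof; I would present the first and keep the second as a sanity check.
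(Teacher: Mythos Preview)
Your first argument is correct and follows the same overall route as the paper: rewrite the constrained problem as minimizing $f+\delta_{\cB(x,t)}$, apply Fermat's rule, then split the subdifferential via a sum rule and invoke \Cref{lemma:subfiff_id}. The only difference is in how the constraint qualification is verified. The paper establishes $\ri(\dom f)\cap\ri(\cB(x,t))\neq\emptyset$ by a short case analysis (if $x\notin\ri(\dom f)$, pick a sequence in $\ri(\dom f)$ converging to $x$ and note it eventually enters the open ball), and then cites Bauschke--Combettes to obtain $0\in\sri(\cB(x,t)-\dom f)$ and hence the splitting. Your verification is more direct: since $t>0$, the indicator $\delta_{\cB(x,t)}$ is continuous at the center $x\in\dom f$, which is already a standard hypothesis of the Moreau--Rockafellar theorem. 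This buys you a shorter argument with no case split. Your second, separation-based proof is also correct and is genuinely self-contained, whereas the paper relies on cited propositions; it is a nice complement, though not needed once the first argument goes through.
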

\begin{proof}
    The proof follows similar ideas to those used in the proof of \citet[Theorem D.2]{gruntkowska2025ball}.
    First, we show that $\ri(\cB(x, t)) \cap \ri(\dom(f)) \neq \emptyset$. This is immediate if $x \in \ri(\dom(f))$. Suppose instead that $x \not\in \ri(\dom(f))$. Then $\overline{\ri(\dom(f))} = \overline{\dom(f)} \ni x$, so there exists a sequence $\{z_k\}_{k\geq0} \subset \ri(\dom(f))$ such that $z_k \to x$ as $k\to\infty$. Since $t>0$ and $x \in \cB(x, t)$, there exists~$K\geq 0$ such that $z_k \in \ri(\cB(x, t))$ for all $k \geq K$.
    It follows that $\ri(\cB(x, t)) \cap \ri(\dom(f)) \neq \emptyset$. Consequently, since both $f$ and $\cB(x, t)$ are convex, we may apply \citet[Proposition 6.19]{bauschke2011convex} to conclude that $0 \in \sri(\cB(x, t) - \dom(f))$. Then, by \citet[Proposition 27.8]{bauschke2011convex}, there exists $g\in \cN_{\cB(x, t)}(u)$ such that $-g \in \partial f(u)$. By the definition of the subdifferential, this implies that
    \begin{align*}
        f(y) - f(u) \geq \inp{g}{u-y}
    \end{align*}
    for all $y \in \cS$.
\end{proof}

The next result generalizes the statement of \citet[Theorem D.1]{gruntkowska2025ball}.

\begin{theorem}\label{thm:1stconv}
    Let $f: \cS \mapsto \R \cup \{+\infty\}$ be proper, closed and convex and $\cC \subseteq \cS$ be a non-empty closed and convex set. Then
    \begin{enumerate}[label=(\roman*)]
        \item $\argmin_{z\in\cC} f(z) \neq \emptyset$. Moreover, if $\cC \cap \cX_{\star} \neq \emptyset$, then $\argmin_{z\in\cC} f(z)$ is a non-empty subset of $\cX_{\star}$. \label{pt:conv_nonempty}
        \item If $\cC \cap \cX_{\star} = \emptyset$, then $\argmin_{z\in\cC} f(z)$ lies on the boundary of $\cC$. Moreover, if $\cC$ is strictly convex, the minimizer is unique. \label{pt:conv_singleton_bdry}
    \end{enumerate}
\end{theorem}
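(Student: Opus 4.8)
\textbf{Proof plan for Theorem \ref{thm:1stconv}.}

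The plan is to prove the two parts separately, leveraging the compactness-type structure already present in the ball setting but phrased here for a general closed convex $\cC$. For part \ref{pt:conv_nonempty}, nonemptiness of $\argmin_{z\in\cC} f(z)$ should follow from the hypothesis that $f$ has a minimizer together with closedness and convexity: intersecting $\cC$ with a sublevel set of $f$ (which is closed and convex, and nonempty once we pick a feasible point) and using that $f$ restricted to $\cC$ is still proper, closed and convex, so the standard existence result (a proper closed convex function that is bounded below on a nonempty closed convex set on which it attains finite values, with suitable coercivity inherited from the existence of a global minimizer) applies. The cleanest route is probably to invoke the analogous statement in \citet{gruntkowska2025ball} (their Theorem D.1), since the text explicitly says this result generalizes it; the only thing to check is that nothing in that argument used $\cC$ being a ball rather than an arbitrary closed convex set. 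For the ``moreover'' clause, if $\cC\cap\cX_\star\neq\emptyset$, pick $x_\star\in\cC\cap\cX_\star$; then for any $u\in\argmin_{z\in\cC} f(z)$ we have $f(u)\le f(x_\star)=f_\star$, hence $f(u)=f_\star$, so $u\in\cX_\star$, giving $\argmin_{z\in\cC} f(z)\subseteq\cX_\star$.

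For part \ref{pt:conv_singleton_bdry}, assume $\cC\cap\cX_\star=\emptyset$ and let $u\in\argmin_{z\in\cC} f(z)$, which exists by part \ref{pt:conv_nonempty}. Suppose for contradiction that $u\in\interior(\cC)$. Since $u\notin\cX_\star$, there is $x_\star\in\cX_\star$ with $f(x_\star)<f(u)$. By convexity of $f$, moving from $u$ along the segment toward $x_\star$, i.e. considering $u_\lambda \eqdef u + \lambda(x_\star - u)$ for small $\lambda>0$, gives $f(u_\lambda)\le (1-\lambda)f(u)+\lambda f(x_\star) < f(u)$; and because $u\in\interior(\cC)$, $u_\lambda\in\cC$ for $\lambda$ small enough. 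This contradicts optimality of $u$ over $\cC$, so $u\in\bdry(\cC)$. For uniqueness under strict convexity of $\cC$: if $u_1\neq u_2$ are both minimizers, then $f(u_1)=f(u_2)=\min_{\cC} f$, and the midpoint $\tfrac12(u_1+u_2)$ lies in $\interior(\cC)$ by strict convexity of the set; since $f$ is convex, $f(\tfrac12(u_1+u_2))\le\min_\cC f$, so the midpoint is also a minimizer lying in $\interior(\cC)$, contradicting what we just proved. (Alternatively one could argue directly that an interior minimizer forces $0\in\partial f(u)$, hence $u\in\cX_\star$, contradicting $\cC\cap\cX_\star=\emptyset$ — this is arguably the slickest form of the boundary argument and avoids the explicit segment computation.)

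I expect the main obstacle to be the existence claim in part \ref{pt:conv_nonempty}: in an infinite-dimensional or merely finite-dimensional-without-compactness setting, a closed convex set need not be bounded, so one must genuinely use that $f$ attains its infimum globally to rule out the minimizing sequence over $\cC$ escaping to infinity. The key observation making this work is that, fixing any $a\in\cC\cap\dom f$, the set $\cC\cap\{z:f(z)\le f(a)\}$ is nonempty, closed and convex, and $\argmin_{z\in\cC}f(z)=\argmin_{z\in\cC\cap\{f\le f(a)\}}f(z)$; one then applies the standard result that a proper lower semicontinuous convex function attains its minimum over a nonempty closed convex set whenever it has \emph{some} nonempty bounded sublevel set — and existence of a global minimizer $\cX_\star\neq\emptyset$ guarantees exactly this via the recession-cone characterization of convex functions with nonempty minimizer sets. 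Since the surrounding paper works in a finite-dimensional space $\cS$, this is routine, and citing \citet[Theorem D.1]{gruntkowska2025ball} (whose proof the authors say this generalizes) is the expedient choice; the rest is the elementary convexity arguments sketched above.
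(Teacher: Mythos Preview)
Your approach to part \ref{pt:conv_singleton_bdry} is essentially identical to the paper's: the paper also argues by contradiction via the line segment from an assumed interior minimizer toward some $x_\star\in\cX_\star$ (it takes the boundary-intersection point rather than a small-$\lambda$ perturbation, but this is cosmetic), and for uniqueness under strict convexity it uses exactly your midpoint-lies-in-the-interior contradiction. Your ``moreover'' clause in part \ref{pt:conv_nonempty} is likewise the paper's one-line argument.

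On the existence claim in part \ref{pt:conv_nonempty} your worry is well placed, but your proposed fix has a gap. The paper simply writes ``by the Weierstrass theorem,'' implicitly relying on compactness of $\cC$ (which holds for norm balls in the finite-dimensional $\cS$, the only case the paper actually uses). Your recession-cone route claims that $\cX_\star\neq\emptyset$ guarantees a nonempty \emph{bounded} sublevel set of $f$; this is false. Take $f(x,y)=\max(0,y)$ on $\R^2$: it is proper, closed, convex with $\cX_\star=\{y\le 0\}\neq\emptyset$, yet every sublevel set is unbounded, and with $\cC=\{(x,y):y\ge e^{-x}\}$ (nonempty, closed, convex) one has $\inf_{\cC} f=0$ unattained. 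So the statement as written, for arbitrary closed convex $\cC$, actually needs boundedness (or some recession-compatibility condition between $f$ and $\cC$); both your argument and the paper's go through once $\cC$ is bounded, which is all that is used downstream.
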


\begin{proof}
    \begin{enumerate}[label=(\roman*)]
        \item The function $f$ is proper, closed and convex, and $\cC$ is closed. Hence, by the Weierstrass theorem, $f$ is lower bounded and attains its minimum over $\cC$. Hence $\argmin_{z\in\cC} f(z) \neq \emptyset$. If $\cC \cap \cX_{\star} \neq \emptyset$, then clearly $\argmin_{z \in \cC} f(z) \subseteq \cX_{\star}$ is non-empty.
        
        \item Let $z_\star \in \argmin_{z\in\cC} f(z)$. 
        Then $z_\star$ is a minimizer of the function
        \begin{align*}
            \psi(z) \eqdef f(z) + \delta_{\cC}(z).
        \end{align*}
        Suppose for contradiction that $z_\star \in \interior (\cC)$ and consider the line segment connecting~$z_\star$ and any global minimizer $x_\star$ of $f$. By assumption, $x_\star \notin \cC$, and hence the line segment must intersect $\bdry (\cC)$ at some point $z_\lambda = \lambda z_\star + \rbrac{1 - \lambda}x_\star$, where $\lambda \in (0, 1)$. Since $f$ is convex, we have 
        \begin{align*}
            f(z_\lambda) \leq \rbrac{1 - \lambda}f(x_\star) + \lambda f(z_\star) < f(z_\star),
        \end{align*}
        where the last inequality is strict because $x_\star \in \cX_{\star}$, $z_\star \in \cC$ and $\cX_{\star}\cap \cC = \emptyset$. But then $\psi(z_\lambda) < \psi(z_\star)$, contradicting the optimality of $z_\star$. Therefore, any minimizer must lie on $\bdry(\cC)$.
        
        Now, assume that $\cC$ is strictly convex but $\argmin_{z\in\cC} f(z)$ is not a singleton. Then there exist two distinct minimizers $z_{\star,1}, z_{\star,2} \in \argmin_{z\in\cC} f(z)$, and by the previous argument, both must lie on $\bdry(\cC)$. But then, due to the convexity of $f$, all points on the line segment connecting $z_{\star, 1}$ and $z_{\star, 2}$ are also minimizers of $\psi(z)$. Since $\cC$ is strictly convex, this contradicts the earlier conclusion that no minimizer of $\psi(z)$ can lie in $\interior(\cC)$. We conclude that $\argmin_{z\in\cC} f(z)$ must be a singleton.
    \end{enumerate}
\end{proof}

\section{Proof of the Main Theorem}\label{sec:main_proof}

\THMMAIN*
\begin{proof}
    \begin{enumerate}[label=(\roman*)]
        \item This follows from \Cref{fact:conv_ball_norm} and \Cref{thm:1stconv}~\ref{pt:conv_nonempty}.
        
        \item This follows from \Cref{fact:conv_ball_norm} and \Cref{thm:1stconv}~\ref{pt:conv_singleton_bdry}. 
    
        \item Consider some iteration $k$ such that $x_{k+1} \not\in \cX_{\star}$ (otherwise, the problem is solved in $1$ step). \Cref{thm:n2ndlp} with $y = x = x_k$ gives
        \begin{align*}
            f(x_k) - f(x_{k+1}) \geq \inp{g}{x_{k+1}-x_k}
        \end{align*}
        for some $g\in \cN_{\cB(x_k, t_k)}(x_{k+1})$, and hence by \Cref{lemma:n_cone_partial},
        \begin{eqnarray}\label{eq:untbsvyab}
            f(x_{k+1}) - f_{\star} &\leq& f(x_k) - f_{\star} - \inp{g}{x_{k+1}-x_k}
            = f(x_k) - f_{\star} - \norm{g}_{\star} \norm{x_{k+1}-x_k} \nonumber \\
            &\overset{\ref{pt:t_dist_nbpm}}{=}& f(x_k) - f_{\star} - t_k \norm{g}_{\star}.
        \end{eqnarray}
        By \Cref{thm:n2ndlp} and Cauchy-Schwarz inequality, we also have
        \begin{align*}
            f(x_{k+1}) - f_{\star} \leq - \inp{g}{x_{k+1}-x_{\star}}
            \leq \norm{g}_{\star} \norm{x_{k+1}-x_{\star}}.
        \end{align*}
        Since $x_{k+1} \not\in \cX_{\star}$, we can rearrange this inequality, obtaining
        \begin{eqnarray}\label{eq:iasbfrfa}
            \parens{f(x_{k+1}) - f_{\star}} \frac{t_k}{\norm{x_{k+1} - x_{\star}}} \leq t_k \norm{g}_{\star}.
        \end{eqnarray}
        Applying the bound \eqref{eq:iasbfrfa} in \eqref{eq:untbsvyab} gives
        \begin{eqnarray*}
            f(x_{k+1}) - f_{\star} \leq f(x_k) - f_{\star} - \parens{f(x_{k+1}) - f_{\star}} \frac{t_k}{\norm{x_{k+1} - x_{\star}}},
        \end{eqnarray*}
        and rearranging the terms, we obtain
        \begin{eqnarray*}
            f(x_{k+1}) - f_{\star} \leq \parens{1 + \frac{t_k}{\norm{x_{k+1} - x_{\star}}}}^{-1} \parens{f(x_k) - f_{\star}}
        \end{eqnarray*}
        as required.

        \item Again, suppose that $x_{k+1} \not\in \cX_{\star}$. Since $f$ is differentiable, $\partial f(u) = \{\nabla f(u)\}$ for all $u\in\cS$, and hence, according to \Cref{thm:n2ndlp}, there exists $g\in \cN_{\cB(x_k, t_k)}(x_{k+1})$ such that $-g \in \partial f(x_{k+1}) = \{\nabla f(x_{k+1})\}$. Then, \Cref{lemma:n_cone_partial} says that
        \begin{align}\label{eq:uasesldnv}
            \inp{- \nabla f(x_{k+1})}{x_{k+1} - x_k} = \norm{\nabla f(x_{k+1})}_{\star} \norm{x_{k+1} - x_k}
            \overset{\ref{pt:t_dist_nbpm}}{=} t_k \norm{\nabla f(x_{k+1})}_{\star}.
        \end{align}
        Now, convexity and Cauchy-Schwarz inequality give
        \begin{align*}
            f(x_{k+1}) - f(x_k) \geq \inp{\nabla f(x_k)}{x_{k+1} - x_k}
            \geq - \norm{\nabla f(x_k)}_{\star} \norm{x_{k+1} - x_k}
            \overset{\ref{pt:t_dist_nbpm}}{=} - t_k \norm{\nabla f(x_k)}_{\star}.
        \end{align*}
        Rearranging the terms and using convexity again, we obtain
        \begin{eqnarray*}
            t_k \norm{\nabla f(x_k)}_{\star} &\geq& f(x_k) - f(x_{k+1}) \\
            &\geq& \inp{\nabla f(x_{k+1})}{x_k - x_{k+1}} \\
            &\overset{\eqref{eq:uasesldnv}}{=}& t_k \norm{\nabla f(x_{k+1})}_{\star},
        \end{eqnarray*}
        which proves the first part of the claim.
        To prove the second part, we again use convexity to obtain
        \begin{eqnarray*}
            f(x_{k+1}) \leq f(x_k) - \inp{\nabla f(x_{k+1})}{x_k - x_{k+1}}
            \overset{\eqref{eq:uasesldnv}}{=} f(x_k) - t_k \norm{\nabla f(x_{k+1})}_{\star}.
        \end{eqnarray*}
        Rearranging the terms and summing over the first $K$ iterations yields
        \begin{eqnarray*}
            \sum_{k=0}^{K-1} \parens{t_k \norm{\nabla f(x_{k+1})}_{\star}}
            \leq \sum_{k=0}^{K-1} \parens{f(x_k) - f(x_{k+1})}
            = f(x_0) - f(x_K)
            \leq f(x_0) - f_{\star}.
        \end{eqnarray*}
        Dividing both sides of the inequality above by $\sum_{k=0}^{K-1} t_k$ proves the claim.
    \end{enumerate}
\end{proof}

One can establish a result analogous to that in \ref{pt:1step_nbpm} by employing a proof strategy similar to that in \citet[Theorem 26]{carmon2020acceleration}. Specifically, under the assumptions of \Cref{thm:bpm_e}, the authors demonstrate that
\begin{align*}
    f(x_{k+1}) - f_{\star} \leq \parens{1-\frac{t_k}{R}} \parens{f(x_k) - f_{\star}},
\end{align*}
where $R$ is a constant such that $\norm{x_0 - x_\star}_2 \leq R$, where $x_{\star}\in\cX_{\star}$. This result is specific to the Euclidean setting. However, the same proof technique can be adapted to obtain a bound more closely aligned with that in \ref{pt:1step_nbpm} in the non-Euclidean case, as formalized in the following theorem.

\begin{theorem}\label{thm:con_cbrox}
    Let the assumptions of \Cref{thm:conv-nbpm} hold and let $\{x_k\}_{k\geq0}$ be the iterates of \ref{eq:bpm_ne} run with any sequence of positive radii $\{t_k\}_{k\geq0}$, where $x_0\in {\rm dom} f$. Then
    \begin{align*}
        f(x_{k+1}) - f_{\star} \leq \parens{1-\frac{t_k}{\norm{x_k - x_\star}}} \parens{f(x_k) - f_{\star}}.
    \end{align*}
\end{theorem}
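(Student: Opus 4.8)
The plan is to exploit convexity of $f$ along the segment joining $x_k$ to a minimizer $x_\star \in \cX_\star$, together with the defining optimality of $x_{k+1}$ over the ball $\cB(x_k, t_k)$. Concretely, set $\lambda \eqdef \min\brac{1, \nicefrac{t_k}{\norm{x_k - x_\star}}}$ and consider the interpolated point $x_\lambda \eqdef (1-\lambda) x_k + \lambda x_\star$. Since $\norm{x_\lambda - x_k} = \lambda \norm{x_\star - x_k} \leq t_k$, we have $x_\lambda \in \cB(x_k, t_k)$; and because $x_k, x_\star \in \dom f$ and $\dom f$ is convex, $x_\lambda \in \dom f$, so all quantities below are finite.

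First I would dispose of the regime $t_k \geq \norm{x_k - x_\star}$: here $\lambda = 1$, so $x_\star \in \cX_\star \cap \cB(x_k, t_k)$, and \Cref{thm:conv-nbpm}\ref{pt:n_opt} already gives $x_{k+1} \in \cX_\star$, i.e., $f(x_{k+1}) - f_\star = 0$, so there is nothing to prove. (The interesting regime is $t_k < \norm{x_k - x_\star}$, where the contraction factor $1 - \nicefrac{t_k}{\norm{x_k - x_\star}}$ genuinely lies in $(0,1)$.)

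In the main regime $\lambda = \nicefrac{t_k}{\norm{x_k - x_\star}} \in (0,1)$, since $x_{k+1}$ minimizes $f$ over $\cB(x_k, t_k)$ and $x_\lambda$ belongs to that ball, convexity of $f$ and $f(x_\star) = f_\star$ give
\begin{align*}
    f(x_{k+1}) \leq f(x_\lambda) = f\parens{(1-\lambda)x_k + \lambda x_\star} \leq (1-\lambda) f(x_k) + \lambda f_\star.
\end{align*}
Subtracting $f_\star$ from both sides and substituting $\lambda = \nicefrac{t_k}{\norm{x_k - x_\star}}$ yields
\begin{align*}
    f(x_{k+1}) - f_\star \leq (1-\lambda)\parens{f(x_k) - f_\star} = \parens{1 - \frac{t_k}{\norm{x_k - x_\star}}}\parens{f(x_k) - f_\star},
\end{align*}
which is the claim.

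I do not expect a genuine obstacle here: the result is essentially a one-line consequence of convexity combined with broximal optimality, and the only points requiring care are the case split on whether $x_\star$ already lies inside the current ball (handled via \Cref{thm:conv-nbpm}\ref{pt:n_opt}) and checking that the interpolated point stays in $\dom f$. It is worth remarking that, in contrast to part \ref{pt:1step_nbpm}, this bound features $\norm{x_k - x_\star}$ rather than $\norm{x_{k+1} - x_\star}$ in the denominator, so it does not immediately telescope into a clean rate without additional control on how $\dist{x_k}{\cX_\star}$ evolves — which, as \Cref{fig:example} shows, need not be monotone outside the Euclidean setting.
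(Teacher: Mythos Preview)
Your proof is correct and follows essentially the same approach as the paper's: both take the point on the segment $[x_k, x_\star]$ at distance $t_k$ from $x_k$, observe it lies in $\cB(x_k,t_k)\cap\dom f$, and combine broximal optimality with convexity. Your presentation is in fact slightly cleaner, since you define the interpolation parameter $\lambda$ directly rather than describing the point as the intersection with $\bdry(\cB(x_k,t_k))$, and you handle the case $t_k \geq \norm{x_k - x_\star}$ explicitly (the paper simply restricts to iterations with $x_{k+1}\notin\cX_\star$).
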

\begin{proof}
    Consider some iteration $k$ such that $x_{k+1} \not\in \cX_{\star}$ and let $z$ be a point where the line segment $[x_k, x_\star]$ intersects $\bdry(\cB(x_k,t_k))$. Then $z \in {\rm dom} f \cap \cB(x_k,t_k)$, so $f(z) \geq f(x_{k+1})$ since $x_{k+1}$ is a minimizer of $f$ over $\cB(x_k,t_k)$. Therefore, convexity of $f$ gives
    \begin{align*}
        f(x_{k+1}) &\leq f(z)
        = f\parens{\parens{1-\frac{\norm{x_k - z}}{\norm{x_k - x_\star}}} x_k + \frac{\norm{x_k - z}}{\norm{x_k - x_\star}} x_\star} \\
        &\leq \parens{1-\frac{\norm{x_k - z}}{\norm{x_k - x_\star}}} f(x_k) + \frac{\norm{x_k - z}}{\norm{x_k - x_\star}} f_\star.
    \end{align*}
    Rearranging and using the fact that $\norm{x_k - z} = t_k$, we obtain
    \begin{align*}
        f(x_{k+1}) - f_{\star} \leq \parens{1-\frac{t_k}{\norm{x_k - x_\star}}} \parens{f(x_k) - f_{\star}}
    \end{align*}
    as needed.
\end{proof}

\subsection{Convergence of Distances for Norms Induced by an Inner Product}\label{sec:ellipsoids}

In this section, we establish an additional convergence result for the distances between iterates and the minimizer when the underlying norm is induced by an inner product. Specifically, we consider the norm $\norm{x}_{\mX} \eqdef \sqrt{x^\top \mX x}$, where $\mX \in \R^{d \times d}$ is a symmetric positive definite matrix. The corresponding norm balls are $d$-dimensional ellipsoids, and we denote the ball of radius~$t$ centered at~$x$ by
\begin{align*}
    \cB_{\mX}(x, t) = \brac{z\in \R^d : \norm{z-x}_{\mX} \leq t}.
\end{align*}
In this setting, Non-Euclidean \algnamesmall{BPM} with the norm choice $\norm{\cdot} = \norm{\cdot}_{\mX}$ iterates
\begin{align}\label{eq:bpm_ne_el}
    x_{k+1} = \argmin \limits_{z \in \R^d} \brac{f(z): {\color{myblue} \norm{z - x_k}_{\mX} \leq t_k}} = \argmin \limits_{z \in \cB_{\mX}(x_k, t_k)} f(z),
\end{align}
where $\{t_k\}_{k \geq 0}$ is a sequence of positive radii.

As in previous section, we begin by presenting some facts and lemmas that will be useful in the main proof (\Cref{thm:conv-ebrox}).

\begin{fact}[Theorem 3.40 of \citet{beck2017first}]
    \label{fact:2:sum-rule-subdiff}
    Let $f_i: \R^d \mapsto \R \cup \cbrac{+\infty}$, $i \in [n]$, be proper convex functions such that $\cap_{i=1}^n \ri({\rm dom} f_i) \neq \emptyset$.
    Then
    \begin{align*}
        \partial \rbrac{\sum_{i=1}^{n} f_i}(x) = \sum_{i=1}^{n}\partial f_i(x)
    \end{align*}
    for any $x \in \R^d$.
\end{fact}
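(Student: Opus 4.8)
The plan is to establish the two inclusions of the identity $\partial(\sum_{i=1}^n f_i)(x) = \sum_{i=1}^n \partial f_i(x)$ separately (both sides being empty unless $x \in \bigcap_i \dom f_i$, which we henceforth assume). The inclusion ``$\supseteq$'' is immediate and needs no regularity: if $g_i \in \partial f_i(x)$ for each $i$, adding the $n$ subgradient inequalities $f_i(y) \geq f_i(x) + \inp{g_i}{y-x}$ shows $\sum_i g_i \in \partial(\sum_i f_i)(x)$. For ``$\subseteq$'' I would first reduce to the case $n=2$ by induction on $n$; the inductive step requires knowing $\ri(\dom(f_1 + \dots + f_{n-1})) = \bigcap_{i<n}\ri(\dom f_i)$, which follows from $\dom(\sum_{i<n} f_i) = \bigcap_{i<n}\dom f_i$ together with the standard fact that the relative interior of a finite intersection of convex sets sharing a common relative-interior point is the intersection of their relative interiors. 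This guarantees that the hypothesis $\bigcap_i\ri(\dom f_i)\neq\emptyset$ is inherited at each stage, so only the two-function case needs a genuine argument.

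For $n = 2$, fix $x \in \dom f_1 \cap \dom f_2$ and $g \in \partial(f_1+f_2)(x)$. Replacing $f_1$ by $f_1 - \inp{g}{\cdot}$, I may assume $g = 0$, i.e.\ that $x$ minimizes $f_1 + f_2$; the goal becomes to produce $a$ with $a \in \partial f_1(x)$ and $-a \in \partial f_2(x)$. In $\R^d\times\R$ consider the convex sets $C_1 \eqdef \brac{(y,s) : s \geq f_1(y) - f_1(x)}$ (the shifted epigraph of $f_1$) and $C_2 \eqdef \brac{(y,s) : s \leq f_2(x) - f_2(y)}$. Their relative interiors are disjoint: a common point $(y,s)$ would force $y \in \ri(\dom f_1)\cap\ri(\dom f_2)$ and $f_1(y) - f_1(x) < s < f_2(x) - f_2(y)$, hence $(f_1+f_2)(y) < (f_1+f_2)(x)$, contradicting minimality of $x$. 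Since $C_1, C_2$ are convex with disjoint relative interiors, the proper separation theorem yields a nonzero $(a,\beta)\in\R^d\times\R$ and a scalar $c$ with $\inp{a}{y} + \beta s \leq c \leq \inp{a}{y'} + \beta s'$ for all $(y,s)\in C_1$ and $(y',s')\in C_2$. Letting $s\to+\infty$ in $C_1$ forces $\beta \leq 0$.

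The crux --- and the step I expect to be the main obstacle --- is showing the separating hyperplane is non-vertical, i.e.\ $\beta \neq 0$. If $\beta = 0$, the inequalities collapse to $\inp{a}{y} \leq c \leq \inp{a}{y'}$ for all $y\in\dom f_1$ and $y'\in\dom f_2$; evaluating at a common relative-interior point $z_0$ gives $\inp{a}{z_0} = c$, whence $\inp{a}{\cdot}$ is constantly $c$ on all of $\dom f_1$ and all of $\dom f_2$, so $C_1 \cup C_2$ lies inside the separating hyperplane, contradicting properness. Hence $\beta < 0$; normalizing $\beta = -1$ and evaluating the two halves of the separating inequality at the tightest choices $s = f_1(y) - f_1(x)$ and $s' = f_2(x) - f_2(y')$, then at $y = x$ to pin down $c = \inp{a}{x}$, yields exactly $a \in \partial f_1(x)$ and $-a\in\partial f_2(x)$. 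Undoing the reduction $f_1 \mapsto f_1 - \inp{g}{\cdot}$ turns this into $a + g \in \partial f_1(x)$ and $-a\in\partial f_2(x)$, whose sum is $g$, completing the two-function case and, through the induction, the theorem. Besides the non-verticality argument, the only delicate point is invoking separation in the relative-interior form rather than the interior form, which is precisely what lets the proof dispense with any nonempty-interior assumption on the sets $\dom f_i$.
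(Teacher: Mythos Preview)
The paper does not give its own proof of this statement; it is quoted verbatim as a known fact (Theorem~3.40 of \citet{beck2017first}) and used as a black box. So there is no ``paper's approach'' to compare against.

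Your argument is the standard Moreau--Rockafellar proof and is correct. The reduction to $n=2$ via $\ri\bigl(\bigcap_{i<n}\dom f_i\bigr)=\bigcap_{i<n}\ri(\dom f_i)$ is the right bookkeeping, and the core two-function step---properly separating the shifted epigraph of $f_1$ from the hypograph of $f_2(x)-f_2(\cdot)$, ruling out a vertical hyperplane using the common relative-interior point, and reading off $a\in\partial f_1(x)$, $-a\in\partial f_2(x)$---is exactly how this theorem is established in the references the paper relies on. The non-verticality argument (linear functional attaining its maximum at a relative-interior point must be constant on the set, hence both $C_1$ and $C_2$ would lie in the hyperplane, contradicting properness) is the key step and you handle it correctly.
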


\begin{fact}[Normal cone of the indicator function of an ellipsoid]\label{fact:subdiff_id_ell}
    The normal cone of $\cB_{\mX}(x, t)$ is
    \begin{align*}
        \cN_{\cB_{\mX}(x, t)}(y) = 
        \begin{cases}
            \R_{\geq0} \mX (y-x) & \norm{x-y}_{\mX} = t, \\
            \{0\} & \norm{x-y}_{\mX} < t, \\
            \emptyset & \norm{x-y}_{\mX} > t.
        \end{cases}
    \end{align*}
\end{fact}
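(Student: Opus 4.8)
The plan is to derive all three cases from the general normal-cone formula for norm balls established in \Cref{fact:cone_ball}, specialized to the Mahalanobis norm $\norm{\cdot}_{\mX}$. Two of the cases are essentially immediate. If $\norm{x-y}_{\mX} > t$, then $y \notin \cB_{\mX}(x,t)$, so the normal cone is empty by the usual convention that $\cN_{\cC}(y)$ is defined only for $y \in \cC$. If $\norm{x-y}_{\mX} < t$, then for $g$ in the normal cone the characterization of \Cref{fact:cone_ball} combined with $t\norm{g}_{\star} \le \inp{g}{y-x} \le \norm{g}_{\star}\norm{y-x}_{\mX}$ gives $\norm{g}_{\star}\,(t - \norm{y-x}_{\mX}) \le 0$, and since $t > \norm{y-x}_{\mX}$ this forces $g = 0$; hence $\cN_{\cB_{\mX}(x,t)}(y) = \{0\}$.

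For the boundary case I would first record the dual norm of $\norm{\cdot}_{\mX}$. Substituting $w = \mX^{1/2} z$ in the definition gives $\norm{g}_{\star} = \sup_{\norm{z}_{\mX}\le 1} \inp{g}{z} = \sup_{\norm{w}_2 \le 1} \inp{\mX^{-1/2}g}{w} = \sqrt{g^{\top}\mX^{-1}g}$, so the norm dual to $\norm{\cdot}_{\mX}$ is $\norm{\cdot}_{\mX^{-1}}$. Hence, on the boundary, where $(y-x)^{\top}\mX(y-x) = t^2$, \Cref{fact:cone_ball} says that $g \in \cN_{\cB_{\mX}(x,t)}(y)$ if and only if $t\sqrt{g^{\top}\mX^{-1}g} \le g^{\top}(y-x)$.

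It then remains to show that this set is exactly $\R_{\geq0}\mX(y-x)$. For the inclusion ``$\supseteq$'', plugging in $g = \lambda\mX(y-x)$ with $\lambda \ge 0$ gives $g^{\top}\mX^{-1}g = \lambda^2 t^2$ and $g^{\top}(y-x) = \lambda t^2$, so the defining inequality holds (in fact with equality). For ``$\subseteq$'', I would apply Cauchy--Schwarz to $\mX^{-1/2}g$ and $\mX^{1/2}(y-x)$, obtaining $g^{\top}(y-x) \le \sqrt{g^{\top}\mX^{-1}g}\,\sqrt{(y-x)^{\top}\mX(y-x)} = t\sqrt{g^{\top}\mX^{-1}g}$; combined with the normal-cone inequality in the opposite direction, every inequality is an equality, so Cauchy--Schwarz holds with equality, and since $y \ne x$ (because $t > 0$) this forces $\mX^{-1/2}g = \mu\,\mX^{1/2}(y-x)$ for some $\mu \ge 0$, i.e.\ $g = \mu\mX(y-x)$, with $g = 0$ covered by $\mu = 0$.

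The only step requiring genuine care is this last equality-case analysis: one must track the sign in Cauchy--Schwarz so that the proportionality constant comes out nonnegative and the direction is $\mX(y-x)$ rather than $(y-x)$ or $\mX^{-1}(y-x)$; everything else is routine. As an alternative route, one could instead write $\cB_{\mX}(x,t) = \{z : (z-x)^{\top}\mX(z-x) - t^2 \le 0\}$, note that $x$ is a strictly feasible (Slater) point, and invoke the standard formula for the normal cone of a smooth sublevel set, which on the boundary gives $\R_{\geq0}$ times the gradient $2\mX(y-x)$; but since \Cref{fact:cone_ball} is already available, the first approach is cleaner and self-contained.
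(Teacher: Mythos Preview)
The paper states this result as a \emph{Fact} without proof, so there is no argument in the paper to compare against. Your proposal is correct and self-contained: it reduces everything to \Cref{fact:cone_ball} (which the paper does prove), identifies the dual norm as $\norm{g}_{\star} = \sqrt{g^{\top}\mX^{-1}g}$, and handles the boundary case via the equality condition in Cauchy--Schwarz applied to $\mX^{-1/2}g$ and $\mX^{1/2}(y-x)$.

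The only step I would tighten is the claim ``for some $\mu \ge 0$'': the equality case of Cauchy--Schwarz by itself only yields $\mX^{-1/2}g = \mu\,\mX^{1/2}(y-x)$ for some $\mu \in \R$. Nonnegativity then comes from the normal-cone inequality, since $g^{\top}(y-x) = \mu\,(y-x)^{\top}\mX(y-x) = \mu t^2$ while $g^{\top}(y-x) \ge t\norm{g}_{\star} \ge 0$, forcing $\mu \ge 0$. You flag this in your closing remark, but it would be cleaner to write the one line explicitly rather than leave it as a caution. The alternative Slater/sublevel-set route you mention is also valid and is in fact the more standard textbook derivation; either would serve as a proof the paper omits.
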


The next result is a consequence of \Cref{thm:n2ndlp}.

\begin{corollary}\label{cor:2ndebrox}
    Let $f: \R^d \mapsto \R \cup \{+\infty\}$ be proper, closed and convex. Choose $x \in {\rm dom} f$ and $u \in \argmin_{z \in \cB_{\mX}(x, t)} f(z)$, where $t>0$.
    Then, there exists $c_t(x) \geq 0$ such that
    \begin{enumerate}[label=(\roman*)]
        \item $c_t(x) \mX (x - u) \in \partial f(u)$,
        \item $f(y) - f(u) \geq c_t(x)\inp{\mX (x - u)}{y - u}$ for all $y \in \R^d$.
    \end{enumerate}
\end{corollary}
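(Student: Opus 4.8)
The plan is to obtain this corollary as an immediate specialization of \Cref{thm:n2ndlp} to the ellipsoidal ball $\cB_{\mX}(x,t)$, combined with the explicit description of its normal cone in \Cref{fact:subdiff_id_ell}. Observe first that part (ii) is nothing but the subgradient inequality for the vector $c_t(x)\mX(x-u)$, so once (i) is established, (ii) follows directly from the definition of the subdifferential. Hence all the content is in producing the scalar $c_t(x) \geq 0$ witnessing (i).

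First I would invoke \Cref{thm:n2ndlp} with the ball taken to be $\cB(x,t) = \cB_{\mX}(x,t)$ (this is a legitimate norm ball, for the norm $\norm{\cdot}_{\mX}$). Since $f$ is proper, closed and convex and $u \in \argmin_{z \in \cB_{\mX}(x,t)} f(z)$, the theorem yields some $g \in \cN_{\cB_{\mX}(x,t)}(u)$ with $-g \in \partial f(u)$. (Alternatively, one could re-derive this directly via the sum rule \Cref{fact:2:sum-rule-subdiff} applied to $f + \delta_{\cB_{\mX}(x,t)}$, whose subdifferential at the minimizer $u$ contains $0$; but routing through \Cref{thm:n2ndlp} is cleaner.)

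Next I would pin down the form of $g$ using \Cref{fact:subdiff_id_ell}. Because $u \in \cB_{\mX}(x,t)$, the case $\norm{x-u}_{\mX} > t$, in which the normal cone is empty, cannot occur. In the two remaining cases the normal cone is $\{0\}$ (when $\norm{x-u}_{\mX} < t$) or $\R_{\geq 0}\,\mX(u-x)$ (when $\norm{x-u}_{\mX} = t$); in either case we may write $g = c\,\mX(u-x)$ for some scalar $c \geq 0$, taking $c = 0$ in the interior case. Setting $c_t(x) \eqdef c \geq 0$, we get $-g = c_t(x)\,\mX(x-u) \in \partial f(u)$, which is exactly claim (i). Claim (ii) then reads off from the definition of the subdifferential: $f(y) - f(u) \geq \inp{c_t(x)\,\mX(x-u)}{y-u} = c_t(x)\inp{\mX(x-u)}{y-u}$ for all $y \in \R^d$.

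I do not expect a genuine obstacle here, since this is essentially a bookkeeping exercise layering two already-proved facts. The only mild subtlety worth handling explicitly is the interior case $\norm{x-u}_{\mX} < t$: there the constrained minimizer is already an unconstrained minimizer of $f$, the normal cone collapses to $\{0\}$, and $0 \in \partial f(u)$; this is absorbed uniformly into the statement by permitting $c_t(x) = 0$.
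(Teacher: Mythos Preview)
Your argument is correct and follows essentially the same approach as the paper: invoke \Cref{thm:n2ndlp} for the ellipsoidal ball and then read off the explicit form of the normal cone from \Cref{fact:subdiff_id_ell}. The only cosmetic difference is that the paper organizes the case split according to whether $\cX_\star$ meets $\cB_{\mX}(x,t)$ (using \Cref{thm:1stconv} to conclude $u\in\cX_\star$ and hence $0\in\partial f(u)$ in the first case, and $\norm{x-u}_{\mX}=t$ in the second), whereas you split directly on whether $u$ lies in the interior or on the boundary; your organization is slightly leaner since it avoids the auxiliary appeal to \Cref{thm:1stconv}.
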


\begin{proof}
    Suppose first that $\cB_{\mX}(x, t) \cap \cX_{\star} \neq \emptyset$. Then, by \Cref{thm:1stconv}, we have $u\in\cX_{\star}$, which implies that $0 \in \partial f(u)$. Therefore, statement $(i)$ holds with $c_t(x) = 0$. Since $u$ is a global minimizer of $f$, it follows that $f(y) \geq f(u)$ for all $y \in \R^d$, so statement $(ii)$ also holds.

    Now, suppose instead that $\cB_{\mX}(x, t) \cap \cX_{\star} = \emptyset$. In this case, \Cref{thm:n2ndlp} guarantees the existence of a vector $g\in \cN_{\cB_{\mX}(x, t)}(u)$ such that $-g \in \partial f(u)$. Moreover, \Cref{thm:1stconv} ensures that $\norm{x-u}_{\mX} = t$. The conclusion then follows directly from \Cref{fact:subdiff_id_ell}.
\end{proof}

\begin{theorem}\label{thm:conv-ebrox}
    Assume that $f: \R^d \mapsto \R \cup \{+\infty\}$ is proper, closed and convex, and let $\{x_k\}_{k\geq0}$ be the iterates of Non-Euclidean \algnamesmall{BPM} with $\norm{\cdot} = \norm{\cdot}_{\mX}$ run with any sequence of positive radii $\{t_k\}_{k\geq0}$, where $x_0\in {\rm dom} f$.
    If $\cX_{\star}\cap \cB_{\mX}(x_k, t_k)=\emptyset$, then $\norm{x_{k+1} - x_k}_{\mX} = t_k$. Moreover, for any $x_{\star}\in\cX_{\star}$, we have
    \begin{align*}
        \norm{x_{k+1} - x_{\star}}_{\mX}^2 \leq \norm{x_k - x_{\star}}_{\mX}^2 - t_k^2
    \end{align*}
    and
    \begin{align*}
        \textnormal{dist}^2(x_{k+1}, \cX_{\star}) \leq \textnormal{dist}^2(x_k, \cX_{\star}) - t_k^2.
    \end{align*}
    Hence, if $\sum_{k=0}^{K-1} t_k^2 \geq \textnormal{dist}^2(x_0, \cX_{\star})$, then $x_K\in\cX_{\star}$.
\end{theorem}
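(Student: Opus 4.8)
The plan is to replay the Euclidean argument behind \eqref{eq:d_conv_e}, but carried out inside the inner product $\inp{a}{b}_{\mX} \eqdef a^\top \mX b$ that induces $\norm{\cdot}_{\mX}$. Fix an iteration $k$ with $\cX_{\star}\cap \cB_{\mX}(x_k, t_k)=\emptyset$; the complementary case is absorbed into the finite-termination argument at the end. The identity $\norm{x_{k+1}-x_k}_{\mX} = t_k$ is immediate from \Cref{thm:1stconv}~\ref{pt:conv_singleton_bdry}: the ellipsoid $\cB_{\mX}(x_k,t_k)$ is strictly convex, so the (unique) minimizer $x_{k+1}$ of $f$ over it must lie on its boundary.

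For the distance decrease, I would first extract an ``obtuse angle'' inequality. Applying \Cref{cor:2ndebrox} with $x = x_k$, $t = t_k$, $u = x_{k+1}$ produces a scalar $c \eqdef c_{t_k}(x_k) \geq 0$ with $f(y) - f(x_{k+1}) \geq c\,\inp{x_k - x_{k+1}}{y - x_{k+1}}_{\mX}$ for all $y \in \R^d$ (using symmetry of $\mX$ to rewrite $\inp{\mX(x_k - x_{k+1})}{\cdot}$ as $\inp{x_k - x_{k+1}}{\cdot}_{\mX}$). Taking $y = x_\star$ and using $f(x_\star) = f_\star \leq f(x_{k+1})$ gives $c\,\inp{x_k - x_{k+1}}{x_\star - x_{k+1}}_{\mX} \leq 0$. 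Here $c > 0$: if $c = 0$, then part $(i)$ of \Cref{cor:2ndebrox} forces $0 \in \partial f(x_{k+1})$, i.e.\ $x_{k+1} \in \cX_\star \cap \cB_{\mX}(x_k, t_k)$, contradicting the case hypothesis. Dividing by $c$ and rearranging yields the key inequality $\inp{x_{k+1} - x_\star}{x_{k+1} - x_k}_{\mX} \leq 0$. Plugging it, together with $\norm{x_{k+1}-x_k}_{\mX}^2 = t_k^2$, into the Pythagorean-type identity
\begin{align*}
    \norm{x_{k+1} - x_{\star}}_{\mX}^2 = \norm{x_k - x_{\star}}_{\mX}^2 - \norm{x_{k+1} - x_k}_{\mX}^2 + 2\inp{x_{k+1} - x_{\star}}{x_{k+1} - x_k}_{\mX}
\end{align*}
(valid for any norm induced by an inner product) gives $\norm{x_{k+1} - x_{\star}}_{\mX}^2 \leq \norm{x_k - x_{\star}}_{\mX}^2 - t_k^2$. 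To pass to distances, take $x_\star$ to be the $\norm{\cdot}_{\mX}$-projection of $x_k$ onto the closed convex set $\cX_\star$, so that $\norm{x_k - x_\star}_{\mX} = \textnormal{dist}(x_k, \cX_\star)$, and bound $\textnormal{dist}(x_{k+1}, \cX_\star) \leq \norm{x_{k+1} - x_\star}_{\mX}$; this gives $\textnormal{dist}^2(x_{k+1}, \cX_\star) \leq \textnormal{dist}^2(x_k, \cX_\star) - t_k^2$, where $\textnormal{dist}$ is measured in $\norm{\cdot}_{\mX}$ here and in the final assertion.

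For finite termination: if $\cX_\star \cap \cB_{\mX}(x_j, t_j) \neq \emptyset$ for some $j \in \{0, \dots, K-1\}$, then \Cref{thm:conv-nbpm}~\ref{pt:n_opt} gives $x_{j+1} \in \cX_\star$, and since every later ball contains its own center, which now lies in $\cX_\star$, the same fact inductively keeps all subsequent iterates in $\cX_\star$, so $x_K \in \cX_\star$. Otherwise the recursion above applies at every $k = 0, \dots, K-1$, and telescoping gives $\textnormal{dist}^2(x_K, \cX_\star) \leq \textnormal{dist}^2(x_0, \cX_\star) - \sum_{k=0}^{K-1} t_k^2 \leq 0$, so $x_K \in \cX_\star$ by closedness of $\cX_\star$. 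The one genuinely delicate point I anticipate is establishing $c > 0$ — this is exactly where the hypothesis $\cX_\star \cap \cB_{\mX}(x_k, t_k) = \emptyset$ is used (via \Cref{fact:subdiff_id_ell} and \Cref{cor:2ndebrox}); everything else is routine geometry of the $\mX$-inner product, essentially mirroring the Euclidean proof of \eqref{eq:d_conv_e} once that inner product is in place.
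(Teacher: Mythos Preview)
Your proposal is correct and follows essentially the same route as the paper: invoke \Cref{cor:2ndebrox} to get the subgradient inequality, argue $c>0$ by contradiction with the case hypothesis, and then expand the $\mX$-inner-product polarization identity to obtain the distance decrease. Your finite-termination argument is in fact slightly more careful than the paper's (you explicitly handle the case where some intermediate ball meets $\cX_\star$), but the core ideas are identical.
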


\begin{proof}
    Suppose that $\cX_{\star}\cap \cB_{\mX}(x_k, t_k)=\emptyset$. The fact that $\norm{x_{k+1} - x_k}_{\mX} = t_k$ is an immediate consequence of \Cref{thm:1stconv}.  We now turn to the proof of the first inequality. Applying \Cref{cor:2ndebrox} with $x = x_k$ and $y = x_\star$, we obtain
    \begin{eqnarray*}
        f(x_{k+1}) - f_{\star} \leq c_{t_k}(x_k) \inp{\mX (x_k-x_{k+1})}{x_{k+1} - x_{\star}}
    \end{eqnarray*}
    for some $c_{t_k}(x_k)\geq 0$. In fact, the inequality is strict. Indeed, if $c_{t_k}(x_k)=0$, then $f(x_{k+1}) - f_{\star} \leq 0$, implying $x_{k+1} \in \cX_{\star}$, which contradicts the assumption that $\cX_{\star}\cap B_{t_k}(x_k)=\emptyset$. Therefore, $c_{t_k}(x_k) > 0$, and we may divide both sides of the inequality to get
    \begin{eqnarray*}
        0 &\leq& \frac{f(x_{k+1}) - f_{\star}}{c_{t_k}(x_k)} \leq \inp{\mX (x_k-x_{k+1})}{x_{k+1} - x_{\star}} \\
        &=& \frac{1}{2} \parens{\norm{x_k-x_{\star}}_{\mX}^2 - \norm{x_{k+1} - x_{\star}}_{\mX}^2 - \norm{x_k-x_{k+1}}_{\mX}^2} \\
        &=& \frac{1}{2} \parens{\norm{x_k-x_{\star}}_{\mX}^2 - \norm{x_{k+1} - x_{\star}}_{\mX}^2 - t_k^2},
    \end{eqnarray*}
    and the first inequality follows. Since this holds for any $x_{\star}\in\cX_{\star}$, it also holds for the point in $\cX_{\star}$ that is closest to $x_k$. Noting that $\textnormal{dist}^2(x_{k+1}, \cX_{\star}) \leq \norm{x_{k+1} - x_{\star}}_{\mX}^2$, we obtain the recursive inequality in terms of distances. Finally, the fact that if $\sum_{k=0}^{K-1} t_k^2 \geq \textnormal{dist}^2(x_0, \cX_{\star})$ then $x_K\in\cX_{\star}$ follows immediately from this result.
\end{proof}

\subsection{Norm Design Under Fixed Volume Constraints}

To support the claim made in the \Cref{sec:norm_prec} that it is always geometrically feasible to construct a norm ball of fixed volume that contains the solution, we now provide a formal theorem statement and proof. In particular, we show that for any pair of points $x_0, x_\star$ and any target volume $V > 0$, one can construct a Mahalanobis norm $\norm{\cdot}_{\mX}$ such that the corresponding ellipsoid of radius $t_0 = \norm{x_0 - x_\star}_{\mX}$ has volume exactly $V$. This guarantees that $x_\star$ lies on the boundary of the ellipsoid, and demonstrates that geometry can always be adapted so that the initial ball includes the solution.

\begin{theorem}\label{thm:vol_arg}
    Let $x_0, x_\star \in \mathbb{R}^d$ be distinct points, and let $V > 0$. Then, there exists a symmetric positive definite matrix $\mX \in \mathbb{R}^{d \times d}$ such that the volume of the $d$-dimensional ellipsoid $\cB_{\mX}(x_0, t_0) \eqdef \brac{z\in \R^d : \norm{z-x_0}_{\mX} \leq t_0}$, where $t_0 = \norm{x_0 - x_\star}_{\mX}$, is $\vol(\cB_{\mX}(x_0, t_0))=V$.
\end{theorem}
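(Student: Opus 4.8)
The plan is to reduce the statement to a one-parameter problem that can be solved in closed form. The starting point is the elementary volume identity for Mahalanobis balls: substituting $w = \mX^{1/2}(z - x_0)$ gives
\begin{align*}
    \vol(\cB_{\mX}(x_0, t)) = t^d\, \det(\mX)^{-1/2}\, \vol(\cB_2(0,1))
\end{align*}
for every $t > 0$ and every symmetric positive definite $\mX$ (this is exactly the relation already invoked in \Cref{sec:norm_prec}). Since the prescribed radius is $t_0 = \norm{x_0 - x_\star}_{\mX}$, the requirement $\vol(\cB_{\mX}(x_0, t_0)) = V$ becomes the single scalar equation
\begin{align*}
    \norm{x_0 - x_\star}_{\mX}^d\, \det(\mX)^{-1/2} = \frac{V}{\vol(\cB_2(0,1))}.
\end{align*}

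To solve it explicitly, I would restrict attention to matrices of the form $\mX_\alpha \eqdef \alpha \mP + \mP^\perp$, where $\mP \eqdef \frac{(x_0 - x_\star)(x_0 - x_\star)^\top}{\norm{x_0 - x_\star}_2^2}$ is the orthogonal projector onto $\linspan(x_0 - x_\star)$ (well defined since $x_0 \neq x_\star$), $\mP^\perp \eqdef \mI - \mP$, and $\alpha > 0$ is a scalar parameter. For any $\alpha > 0$, the matrix $\mX_\alpha$ is symmetric positive definite, with eigenvalue $\alpha$ in the direction $x_0 - x_\star$ and eigenvalue $1$ on the orthogonal complement. Two elementary computations then give $\det(\mX_\alpha) = \alpha$ and, since $x_0 - x_\star$ is an eigenvector of $\mX_\alpha$ with eigenvalue $\alpha$,
\begin{align*}
    \norm{x_0 - x_\star}_{\mX_\alpha}^2 = (x_0 - x_\star)^\top \mX_\alpha (x_0 - x_\star) = \alpha\, \norm{x_0 - x_\star}_2^2.
\end{align*}

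Substituting these two identities into the scalar equation collapses its left-hand side to $\alpha^{(d-1)/2}\, \norm{x_0 - x_\star}_2^d$, so it remains to solve $\alpha^{(d-1)/2} = \frac{V}{\norm{x_0 - x_\star}_2^d\, \vol(\cB_2(0,1))}$. For $d \geq 2$ the map $\alpha \mapsto \alpha^{(d-1)/2}$ is a continuous, strictly increasing bijection of $(0, \infty)$ onto itself, so this equation has the unique solution
\begin{align*}
    \alpha^\star = \parens{\frac{V}{\norm{x_0 - x_\star}_2^d\, \vol(\cB_2(0,1))}}^{\frac{2}{d-1}} > 0,
\end{align*}
and $\mX \eqdef \mX_{\alpha^\star}$ is the required matrix.

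I do not anticipate a substantive obstacle; the construction is fully explicit and only a couple of bookkeeping points need care. First, one should observe that the circular-looking dependence of $t_0 = \norm{x_0 - x_\star}_{\mX}$ on $\mX$ is harmless once the ansatz pins $\mX$ down to the single scalar $\alpha$, because the resulting volume is then simply an explicit power of $\alpha$. Second, the case $d = 1$ is genuinely exceptional and must be excluded (or handled separately): in one dimension $\cB_{\mX}(x_0, \norm{x_0 - x_\star}_{\mX})$ is the interval of length $2\norm{x_0 - x_\star}_2$ regardless of $\mX$, so the claimed freedom in choosing $V$ is available only when $d \geq 2$, which is precisely the regime relevant to \Cref{sec:norm_prec}.
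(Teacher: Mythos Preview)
Your proposal is correct and follows essentially the same approach as the paper: the paper uses the two-parameter ansatz $\mX = c_1 \mP + c_2 \mP^\perp$ but ultimately sets $c_2 = 1$, arriving at exactly your one-parameter family $\mX_\alpha$ and the same closed-form $\alpha^\star$. Your added remark that the construction requires $d \geq 2$ is a genuine observation the paper omits (its formula $\frac{2}{d-1}$ is likewise undefined when $d=1$).
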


\begin{proof}
    Define the rank-one projector $\mP \eqdef \frac{(x_0 - x_\star) (x_0 - x_\star)^\top}{\norm{x_0 - x_\star}_2^2}$ and its orthogonal complement $\mP^\perp \eqdef \mI - \mP$. Consider the matrix
    \begin{align*}
        \mX \eqdef c_1 \mP + c_2 \mP^\perp,
    \end{align*}
    where $c_1, c_2 > 0$ are scalars to be determined. Recall that
    \begin{align}\label{eq:v}
        V = t_0^d \det(\mX)^{-1/2} \vol(\cB_2(0,1))
        = \norm{x_0 - x_\star}_{\mX}^d \det(\mX)^{-1/2} \vol(\cB_2(0,1)),
    \end{align}
    where $\vol(\cB_2(0,1))$ is the volume of the $d$-dimensional unit Euclidean ball.
    Here, $\norm{x_0 - x_\star}^2_{\mX} = (x_0 - x_\star)^\top \mX (x_0 - x_\star) = c_1 \norm{x_0 - x_\star}^2$, and
    \begin{align*}
        \det(\mX) &= \det\parens{c_2 \parens{\mI + \frac{c_1-c_2}{c_2} \mP}} \\
        &= c_2^d \parens{1 + \frac{c_1-c_2}{c_2} \frac{(x_0 - x_\star)^\top (x_0 - x_\star)}{\norm{x_0 - x_\star}_2^2}} \\
        &= c_1 c_2^{d - 1},
    \end{align*}
    where we used the fact that $\det(\mI + u v^\top) = 1 + u^\top v$. Substituting this into \eqref{eq:v}, we have
    \begin{align*}
        V = (\sqrt{c_1} \norm{x_0 - x_\star}_2)^d (c_1 c_2^{d-1})^{-1/2} \vol(\cB_2(0,1))
        = \norm{x_0 - x_\star}_2^d c_1^{(d - 1)/2} c_2^{-(d - 1)/2} \vol(\cB_2(0,1)),
    \end{align*}
    and hence
    \begin{align*}
        \parens{\frac{c_1}{c_2}}^{(d - 1)/2} = \frac{V}{\norm{x_0 - x_\star}_2^d \vol(\cB_2(0,1))}.
    \end{align*}
    This always has a positive solution for $c_1 > 0$ given any choice of $c_2 > 0$. For example, setting $c_2 = 1$ yields a unique $c_1 > 0$ satisfying the equation, giving
    \begin{align*}
        \mX = c_1 \mP + c_2 \mP^\perp = \parens{\frac{V}{\norm{x_0 - x_\star}_{\mX}^d \vol(\cB_2(0,1))}}^{\frac{2}{d-1}} \mP + \mP^\perp.
    \end{align*}
    Noting that $\mX = c_1 \mP + c_2 \mP^\perp \succ \boldsymbol{0}$ finishes the proof.
\end{proof}

\section{Linearized BPM -- Special Cases}\label{sec:lin_bpm}

We now examine the linearized form of the Non-Euclidean \algnamesmall{BPM} to illustrate how different norm choices give rise to several well-known algorithms. Recall from \eqref{eq:muon_scion_det} that the algorithm update rule can be written as
\begin{align}\label{eq:lin_bpm}
    x_{k+1} &= x_k + t_k \lmo{\cB(0,1)}{\nabla f(x_k)}
    = \argmin \limits_{z \in \cB(x_k,t_k)} \inp{\nabla f(x_k)}{z} \nonumber \\
    &= \argmin \limits_{z \in \cB(x_k,t_k)} \brac{f(x_k) + \inp{\nabla f(x_k)}{z - x_k}} \nonumber \\
    &= \argmin \limits_{z \in \cS} \brac{f_k(z): {\color{myblue} \norm{z - x_k} \leq t_k}},
\end{align}
where $f_k(z) \eqdef f(x_k) + \inp{\nabla f(x_k)}{z - x_k}$ is the linearization of $f$ at the current iterate $x_k$.

Different choices of the norm $\norm{\cdot}$ yield different LMO update rules. Below, we examine some special cases. Note that operations on vectors in $\R^d$, such as $\sign(x)$, $|x|$, $xy$ etc., are applied  component-wise and return a vector in $\R^d$.
\begin{enumerate}
    \item \textbf{$\ell_1$ norm.}
    Let $\cS = \R^d$. For the $\ell_1$ norm, the LMO is given by 
    \begin{align*}
        \lmo{\cB_1(0,t_k)}{y} = - t_k \sign\parens{[y]_{i_{\max}}} e_{i_{\max}},
    \end{align*}
    where $i_{\max} \in \argmax_{i\in[n]} |[y]_i|$, and Algorithm \eqref{eq:lin_bpm} iterates
    \begin{align}\label{eq:lin_bpm_l1}
        x_{k+1} = x_k - t_k \sign\parens{[\nabla f(x_k)]_{i_{\max}}} e_{i_{\max}},
    \end{align}
    recovering Coordinate Descent (\algnamesmall{CD}) \citep{wright2015coordinate} with the greedy Gauss-Southwell selection rule (i.e., choosing $i_k$ such that $|[\nabla f(x_k)]_i|$ is maximized) \citep{nutini2015coordinate}. 
    
    \item \textbf{$\ell_2$ norm.}
    Let $\cS = \R^d$. For the $\ell_2$ norm, we have 
    \begin{align*}
        \lmo{\cB_2(0,t_k)}{y} = - t_k \frac{y}{\norm{y}_2}.
    \end{align*}
    Thus, Algorithm \eqref{eq:lin_bpm} updates
    \begin{align*}
        x_{k+1} = x_k - \frac{t_k}{\norm{\nabla f(x_k)}_2} \nabla f(x_k),
    \end{align*}
    which corresponds to normalized Gradient Descent (\algnamesmall{$\|$GD$\|$}).

    \item \textbf{$\ell_\infty$ norm.}
    Let $\cS = \R^d$. For the $\ell_\infty$ norm, we obtain 
    \begin{align*}
        \lmo{\cB_\infty(0,t_k)}{y} = - t_k \sign\parens{y},
    \end{align*}
    and Algorithm \eqref{eq:lin_bpm} iterates
    \begin{align}\label{eq:lin_bpm_l_inf}
        x_{k+1} = x_k - t_k \sign\parens{\nabla f(x_k)}.
    \end{align}
    This recovers Sign Gradient Descent (\algnamesmall{SignGD}) \citep{riedmiller1993direct, bernstein2018signsgd}.

    \item \textbf{$\ell_p$ norm, $p\in(1,\infty)$.}
    Let $\cS = \R^d$. In general, for any $p\in(1,\infty)$, the LMO takes the form
    \begin{align*}
        \lmo{\cB_p(0,t_k)}{y} = - t_k \frac{\sign\parens{y} |y|^{q-1}}{\norm{y}_q^{q-1}},
    \end{align*}
    where $\frac{1}{p} + \frac{1}{q} =1$. Thus, Algorithm \eqref{eq:lin_bpm} iterates
    \begin{align*}
        x_{k+1} = x_k - \frac{t_k}{\norm{\nabla f(x_k)}_q^{q-1}} \sign\parens{\nabla f(x_k)} |\nabla f(x_k)|^{q-1},
    \end{align*}
    which interpolates between \eqref{eq:lin_bpm_l1} and \eqref{eq:lin_bpm_l_inf}.

    \item \textbf{Spectral norm.}
    Let $\cS = \R^{m\times n}$. Then, for $\norm{\cdot} = \norm{\cdot}_{2\to2}$, the LMO is
    \begin{align*}
        \lmo{\cB(\boldsymbol{0},t_k)}{\mY} = - t_k \mU \mV^T,
    \end{align*}
    where $\mY = \mU \textnormal{diag}(\sigma) \mV^T$ is the (reduced) singular value decomposition of~$\mY$. 
    Algorithm \eqref{eq:lin_bpm} iterates
    \begin{align*}
        \mX_{k+1} = \mX_k - t_k \mU_k \mV_k^T,
    \end{align*}
    (where $\mG_k = \mU_k \textnormal{diag}(\sigma_k) \mV_k^T$).
    This gives the update rule applied to hidden layers by \algnamesmall{Muon}/\algnamesmall{Scion} \citep{jordan2024muon, pethick2025training}.
\end{enumerate}

\end{document}